\def\pmod #1{\ ({\rm{mod}}\ #1)}
\def\Z{\Bbb Z}
\def\N{\Bbb N}
\def\Q{\Bbb Q}
\def\R{\Bbb R}
\def\l{\left}
\def\bg{\bigg}
\def\({\bg(}
\def\){\bg)}
\def\t{\text}
\def\ord{{\rm ord}}
\def\gen{{\rm gen}}
\def\cls{{\rm cls}}
\def\spn{{\rm spn}}
\def\ve{\varepsilon}
\theoremstyle{plain}
\newtheorem{theorem}{Theorem}
\newtheorem{lemma}{Lemma}
\newtheorem{corollary}{Corollary}
\newtheorem{conjecture}{Conjecture}
\theoremstyle{definition}
\theoremstyle{remark}
\begin{document}
 \baselineskip=17pt
\hbox{}
\medskip
\title[On the almost universality of $\lfloor x^2/a\rfloor+\lfloor y^2/b\rfloor+\lfloor z^2/c\rfloor$]
{On the almost universality of $\lfloor x^2/a\rfloor+\lfloor y^2/b\rfloor+\lfloor z^2/c\rfloor$}
\date{}
\author[Hai-Liang Wu, He-Xia Ni, Hao Pan] {Hai-Liang Wu, He-Xia Ni, Hao Pan}

\thanks{2010 {\it Mathematics Subject Classification}.
Primary 11E25; Secondary 11D85, 11E20, 11F27, 11F37.
\newline\indent {\it Keywords}. Floor functions, ternary quadratic forms, half-integral weight modular forms.
\newline \indent Supported by the National Natural Science
Foundation of China (Grant No. 11571162).}

\address {(Hai-Liang Wu)  Department of Mathematics, Nanjing
University, Nanjing 210093, People's Republic of China}
\email{\tt whl.math@smail.nju.edu.cn}

\address{(He-Xia Ni)  Department of Mathematics, Nanjing
University, Nanjing 210093, People's Republic of China}
\email{\tt nihexia@yeah.net}

\address {(Hao Pan) Department of Mathematics, Nanjing
University of Finance $\&$ Economics, Nanjing 210023, People's Republic of China}
\email{{\tt haopan79@zoho.com}}

\begin{abstract}
In 2013, Farhi conjectured that for each $m\geq 3$, every natural number $n$ can be represented as $\lfloor x^2/m\rfloor+\lfloor y^2/m\rfloor+\lfloor z^2/m\rfloor$ with $x,y,z\in\Z$,
where $\lfloor\cdot\rfloor$ denotes the floor function.
Moreover, in 2015, Sun conjectured that every natural number $n$ can be written as $\lfloor x^2/a\rfloor+\lfloor y^2/b\rfloor+\lfloor z^2/c\rfloor$ with $x,y,z\in\Z$, where $a,b,c$ are integers and $(a,b,c)\neq (1,1,1),(2,2,2)$.

In this paper, with the help of congruence theta functions, we prove that for each $m\geq 3$, Farhi's conjecture is true for every sufficiently large integer $n$. And for  $a,b,c\geq 5$ with $a,b,c$ are pairwisely co-prime, we also confirm Sun's conjecture for every sufficiently large integer $n$.
\end{abstract}

\maketitle

\section{Introduction}
\setcounter{lemma}{0}
\setcounter{theorem}{0}
\setcounter{corollary}{0}
\setcounter{remark}{0}
\setcounter{equation}{0}
\setcounter{conjecture}{0}
\setcounter{proposition}{0}

The well-known Legendre three-square theorem says that any integer $n\in\N=\{0,1,2,\ldots\}$ can be represented as the sum of three squares (i.e., $n=x^2+y^2+z^2$ with $x,y,z\in\Z$), if and only if $n$ is not of the form $4^k(8h+7)$ where $k,h\in\N$. The
three-square theorem also implied Gauss' famous Eureka theorem: every natural number can be written as the sum of three triangular numbers, where the triangular numbers are the integers of the form $x(x+1)/2$ with $x\in\Z$. Nowdays, the arithmetical properties of the diagonal ternary quadratic form $ax^2+by^2+cz^2$, where $a,b,c$ are positive integers, have been deeply investigated. For example, in \cite[pp. 112-113]{D}, Dickson listed $102$ regular diagonal quadratic forms. For example, $n\in\N$ can be written as $2x^2+3y^2+3z^2$, if and only if $n$ is not of the form $9^k(3h+1)$. Another important example is Ramanujan's ternary quadratic form $x^2+y^2+10z^2$. Under the assumption of the generalized Riemann hypothesis, Ono and Soundararajan \cite{OS} proved that
$x^2+y^2+10z^2$ can represent every odd integer greater than $2719$.

On the other hand, in \cite{F13},  Farhi  considered the ternary sum of the form
$$
\bigg\lfloor \frac{x^2}m\bigg\rfloor+\bigg\lfloor \frac{y^2}m\bigg\rfloor+\bigg\lfloor \frac{z^2}m\bigg\rfloor,
$$ where $m$ is a positive integer and $\lfloor\cdot\rfloor$ is the floor function given by $\lfloor\theta\rfloor:=\max\{k\leq\theta:\, k\in\Z\}$.
Motived by the three-square theorem and the Eureka theorem,
Farhi proposed the following conjecture:
\begin{conjecture}
For each integer $m\geq 3$, every natural number $n$ can be represented as $n=\lfloor x^2/m\rfloor+\lfloor y^2/m\rfloor+\lfloor z^2/m\rfloor$ with $x,y,z\in\Z$.
\end{conjecture}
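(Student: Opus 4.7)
The plan is to reformulate Farhi's equation as a representation problem for $x^2+y^2+z^2$ with congruence conditions on $x,y,z$ modulo $m$, and then to apply the theory of half-integral weight modular forms. Writing $x^2 = m\lfloor x^2/m\rfloor+\rho_m(x)$ with $\rho_m(x)\in\{0,1,\ldots,m-1\}$ equal to $x^2\bmod m$, Farhi's equation is equivalent to
\[
x^2+y^2+z^2 \;=\; mn+\rho_m(x)+\rho_m(y)+\rho_m(z).
\]
So it suffices to exhibit, for every sufficiently large $n$, a triple $(a,b,c)\in(\Z/m\Z)^3$ and integers with $x\equiv a$, $y\equiv b$, $z\equiv c\pmod m$ satisfying $x^2+y^2+z^2=N_{a,b,c}(n):=mn+\rho_m(a)+\rho_m(b)+\rho_m(c)$.

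The main tool is the congruence theta series
\[
\Theta_{a,b,c}(\tau) \;=\; \sum_{\substack{x,y,z\in\Z\\ x\equiv a,\, y\equiv b,\, z\equiv c\, (\mathrm{mod}\, m)}} q^{x^2+y^2+z^2}, \qquad q=e^{2\pi i\tau},
\]
a modular form of weight $3/2$ on a congruence subgroup of level dividing $4m^2$ with an explicit quadratic character. Decompose $\Theta_{a,b,c}=E_{a,b,c}+C_{a,b,c}$ into its Eisenstein and cuspidal parts. The $N$-th Fourier coefficient of $E_{a,b,c}$ is a product of local densities (Siegel--Weil), bounded below by $N^{1/2-\varepsilon}$ with an implied constant depending on $\varepsilon$ and $m$ whenever every local density is positive; the $N$-th coefficient of $C_{a,b,c}$ satisfies the Duke--Iwaniec bound $O_{\varepsilon,m}(N^{3/7+\varepsilon})$ for coefficients of weight-$3/2$ cusp forms. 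Hence, provided no local obstruction is present, the Eisenstein main term dominates the cuspidal error and a representation of $N_{a,b,c}(n)$ with the prescribed residues exists for all sufficiently large $N$.

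The remaining task is to show that, for each large $n$, some triple $(a,b,c)$ makes $N_{a,b,c}(n)$ locally representable with the given residues. The only possible obstructions are the Legendre condition $N\neq 4^k(8h+7)$ at the prime $2$ and $p$-adic representability at primes $p\mid m$; at $p\nmid 2m$ nothing needs to be checked. Since $0,1\in\rho_m(\Z/m\Z)$ for every $m$, the sum $\rho_m(a)+\rho_m(b)+\rho_m(c)$ attains several residues modulo $8$ as $(a,b,c)$ varies, and a finite case analysis on $m\bmod 8$ and $n\bmod 8$ always produces a triple evading the Legendre obstruction. The principal obstacle is the uniform treatment of the $p$-adic conditions at primes dividing $m$: one must verify for each prime power $p^k\|m$ that among the permitted triples $(a,b,c)$ there is one for which the $p$-adic density of $E_{a,b,c}$ at $N_{a,b,c}(n)$ is positive. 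This is a finite but $m$-dependent check, and making it go through without case distinctions is the source of the case-by-case nature of the final argument; it is also why the approach produces an asymptotic statement (``for sufficiently large $n$'') rather than the full conjecture.
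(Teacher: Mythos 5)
Your proposal follows the same strategy as the paper (reduce to representing $mn+\rho_m(a)+\rho_m(b)+\rho_m(c)$ by $x^2+y^2+z^2$ with congruence conditions, form the congruence theta series, and beat the cuspidal error with the Eisenstein main term via Duke's bound), and like the paper it can only hope to yield almost universality, not the conjecture as stated. However, there is a genuine gap in the analytic core. You decompose $\Theta_{a,b,c}=E_{a,b,c}+C_{a,b,c}$ and assert that the coefficients of the cusp form $C_{a,b,c}$ are $O_{\varepsilon,m}(N^{3/7+\varepsilon})$. That is false for weight $3/2$: the cuspidal space contains the unary theta series $\sum_{r\equiv h\,(N/t)} r\,q^{tr^2}$, whose $N$-th coefficient is of order $\sqrt{N/t}$ on the sparse set $N=tr^2$ --- the same order of magnitude as the Eisenstein lower bound, so it cannot be absorbed into the error term. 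Duke's estimate applies only to the part of the cuspidal projection orthogonal to all unary theta series. This is exactly why the paper uses the three-term decomposition $\theta_{L+v}=\mathcal{E}+\mathcal{U}+f$ and must separately dispose of the exceptional set $\mathcal{S}_m=\{n: l_m(n)=tr^2 \text{ for some squarefree } t\mid 2^{\delta}m\}$. Without an argument for those $n$, your proof breaks down precisely when $mn+\rho_m(a)+\rho_m(b)+\rho_m(c)$ is (up to a small squarefree factor) a perfect square.

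The paper closes this hole by a careful choice of residues forcing $l_m(n)\equiv 1\pmod 4$ (or $\not\equiv 1 \pmod 8$, etc.) and coprime to the odd part of $m$, so that the only admissible $t$ is $t=1$; and if $mn+a_0+b_0+c_0=x^2$ does occur, the choice $0\le a_0+b_0+c_0<m$ makes $n=\lfloor x^2/m\rfloor+\lfloor 0/m\rfloor+\lfloor 0/m\rfloor$ a representation outright. You would need to add both of these ingredients. Two smaller points: (i) the Eisenstein lower bound $N^{1/2-\varepsilon}$ also requires bounded divisibility at the anisotropic prime $2$, which your residue choices must enforce (the paper does this by fixing $l_m(n)$ modulo $4$ or $8$); (ii) conversely, you overestimate the difficulty at odd primes $p\mid m$ --- taking one of the shift residues to be a unit mod $p$ makes the local condition automatic by Hensel's lemma applied to the linear term (Lemma 2.1(i) of the paper), so no $m$-dependent case analysis is needed there; all the genuine casework lives at $p=2$ and in controlling the unary theta support.
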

Note that the assumption $m\geq 3$ is necessary, since $\lfloor x^2/2\rfloor$ is  even for any $x\in\Z$. Farhi's conjecture is only verified for some small values of $m$:
\begin{itemize}
\item
$m=4,8$ by Farhi \cite{F13};

\item $m=3$ by Mezroui, Azizi and Ziane \cite{F14,MAZ};

\item $m=7,9,20,24,40,104,120$ by Holdum, Klausen and Rasmussen \cite{HKR};

\item $m=5,6,15$ by Sun \cite{Sun15}.\end{itemize}

Furthermore, in \cite{Sun15}, Sun investigated the ternary sum
\begin{equation}\label{Sunabc}
\bigg\lfloor \frac{x^2}a\bigg\rfloor+\bigg\lfloor \frac{y^2}b\bigg\rfloor+\bigg\lfloor \frac{z^2}c\bigg\rfloor,
\end{equation}
where $a,b,c$ are positive integers. Clearly (\ref{Sunabc}) is an analogue of the diagonal ternary quadratic forms. Sun introduced the following stronger conjecture:
\begin{conjecture}\label{SunConj}
Every natural number $n$ can be written as $n=\lfloor x^2/a\rfloor+\lfloor y^2/b\rfloor+\lfloor z^2/c\rfloor$ with $x,y,z\in\Z$, unless $(a,b,c)=(1,1,1)$ or $(2,2,2)$.
\end{conjecture}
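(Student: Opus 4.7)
My plan is to reduce Sun's conjecture to an integral representation problem for weighted diagonal ternary quadratic forms with congruence side conditions. Writing $r_x\in\{0,1,\ldots,a-1\}$ for the residue of $x^2\pmod a$, one has $\lfloor x^2/a\rfloor=(x^2-r_x)/a$, so multiplying through by $abc$ transforms the target equation into
$$bc\,x^2+ac\,y^2+ab\,z^2 \;=\; abc\,n+bc\,r+ac\,s+ab\,t,$$
where $(r,s,t)$ ranges over admissible triples with $r$ a quadratic residue mod $a$ (including $0$), and similarly for $s,t$, and one further requires $x^2\eq r\pmod a$, $y^2\eq s\pmod b$, $z^2\eq t\pmod c$. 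Thus the conjecture becomes: for each admissible $(a,b,c)$ and every $n\in\N$, some admissible $(r,s,t)$ makes $N:=abc\,n+bcr+acs+abt$ representable by $Q(x,y,z)=bcx^2+acy^2+abz^2$ with the prescribed congruences on $(x,y,z)$.

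\textbf{Modular forms machinery for large $n$.} For each admissible $(r,s,t)$, the congruence theta series
$$\ta_{Q,(r,s,t)}(\tau)=\sum_{\substack{x,y,z\in\Z\\ x^2\eq r\,(a),\, y^2\eq s\,(b),\, z^2\eq t\,(c)}}e^{2\pi i Q(x,y,z)\tau}$$
is a half-integral weight modular form on a congruence subgroup whose level depends on $abc$. Decomposing it as $E_{(r,s,t)}+f_{(r,s,t)}$ with $E_{(r,s,t)}$ an Eisenstein series and $f_{(r,s,t)}$ a cusp form, the Fourier coefficient of $E_{(r,s,t)}$ at $N$ is a product of local densities and, summed over all $(r,s,t)$ compatible with a given $n$, grows as $\gs N^{1/2-\ve}$ by Siegel's theorem --- provided every prime $p\mid 2abc$ admits a local solution. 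The cusp-form coefficients contribute $O(N^{3/7+\ve})$ by the Duke--Iwaniec bound. Hence the genus term dominates and the conjecture holds once $n$ exceeds an explicit threshold $N_0(a,b,c)$.

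\textbf{Closing the gap and main obstacle.} Two further issues remain. First, one must verify local solvability: at every prime $p\mid 2abc$ (and at infinity), the freedom to vary $(r,s,t)$ must be enough to meet the $p$-adic constraint, and it is precisely the excluded triples $(1,1,1)$ and $(2,2,2)$ where this fails (e.g., for $a=b=c=1$ no choice of $(r,s,t)$ is available and $n\eq 7\pmod 8$ is missed; for $a=b=c=2$ the sum is forced to be even). Second, to upgrade \emph{sufficiently large $n$} to \emph{every} $n$, one needs an effective form of Duke's bound giving an explicit $N_0(a,b,c)$, followed by a finite verification below $N_0$. The main obstacle is uniformity in $(a,b,c)$: the level of $\ta_{Q,(r,s,t)}$ and the cusp-to-Eisenstein mass ratio both blow up with $abc$, so $N_0(a,b,c)$ may be quite large; worse still, when $\gcd(a,b,c)>1$ or when some of $a,b,c$ lie in $\{3,4\}$, the form $Q$ typically exhibits spinor-genus exceptions, and one must show the extra freedom in $(r,s,t)$ always dodges this finite exceptional set --- a delicate form-by-form analysis. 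This is why the present paper settles only the pairwise coprime regime $a,b,c\gs 5$ for large $n$; extending the argument to the full range of admissible triples and to every $n$ appears to demand a substantial case study on top of the modular-forms framework sketched above.
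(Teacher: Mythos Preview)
The statement you are addressing is a \emph{conjecture} (Conjecture~\ref{SunConj}); the paper does not prove it and it remains open. The paper establishes only the partial results of Theorems~\ref{Thm a=b=c=m} and~\ref{Thm a,b,c} (almost universality when $a=b=c=m\ge3$, and when $a,b,c\ge5$ are pairwise coprime), so there is no ``paper's own proof'' to compare against. Your proposal is likewise not a proof but a strategy sketch, and you acknowledge as much in your closing paragraph.

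That said, your framework is essentially the one the paper deploys for its partial results, with one substantive technical gap. Your decomposition $\theta=E+f$, with $f$ a cusp form controlled by Duke's $O(N^{3/7+\ve})$, omits the unary theta contribution: the correct splitting (equation~(\ref{decomposition}) in the paper) is $\theta_{L+v}=\mathcal{E}+\mathcal{U}+f$, where $\mathcal{U}$ lies in the span of weight-$3/2$ unary theta series $u_{h,t}$. These are cusp forms whose Fourier coefficients grow like $N^{1/2}$, so Duke's bound does not control them; they are precisely the analytic incarnation of the spinor-genus obstructions you mention. Consequently the obstruction is not a ``finite exceptional set'' as you write, but the infinite set of $N$ of the form $tr^2$ for squarefree $t\mid 2^{\delta}abc$ --- the paper's sets $\mathcal{S}_{a,b,c}^t$ and Lemma~\ref{Lem summary} are built to handle exactly this. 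The real work in the paper's proofs is a delicate case-by-case choice of residues $(\alpha,\beta,\gamma)$ so that $l_{a,b,c}(n)$ is simultaneously locally represented at every prime \emph{and} avoids all the relevant $\mathcal{S}_{a,b,c}^t$; your sketch passes over this step, which is where the substance lies and where the restrictions $a=b=c$ or ``$a,b,c\ge5$ pairwise coprime'' enter. Upgrading from ``sufficiently large $n$'' to ``every $n$'' is, as you note, a further (and ineffective, via Siegel) obstacle that neither you nor the paper overcomes.
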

Evidently Sun's conjecture implies Farhi's conjecture.
In the same paper, Sun had confirmed Conjecture \ref{SunConj} for the following cases:
\begin{itemize}
\item
$(a,b,c)=(1,1,m)$, $m=2,3,4,5,6,9,21$;

\item
$(a,b,c)=(1,m,m)$, $m=5,6,15$.\end{itemize}
Note that for any integers $a,b,c\geq 1$, there are infinitely many positive integers not represented by $ax^2+by^2+cz^2$ with $x,y,z\in\Z$. So it would be a meaningful problem to find more integer-valued ternary functions $F(x,y,z)$ such that $F(x,y,z)$, $x,y,z\in\Z$, can represent all natural numbers.

For an integer-valued ternary function $F(x,y,z)$, we say that $F(x,y,z)$ is {\it universal}, provided that every $n\in\N$ can be represented as $n=F(x,y,z)$ with $x,y,z\in\Z$. Clearly Sun's conjecture is equivalent to that $\lfloor x^2/a\rfloor+\lfloor y^2/b\rfloor+\lfloor z^2/c\rfloor$ is universal for $(a,b,c)\neq (1,1,1),(2,2,2)$. Respectively, $F(x,y,z)$ is called {\it almost universal}, provided that $F(x,y,z)$, $x,y,z\in\Z$, can represent every sufficiently large integer.

Define
$$\mathcal{F}_{a,b,c}(x,y,z):= \bigg\lfloor \frac{x^2}a\bigg\rfloor+\bigg\lfloor \frac{y^2}b\bigg\rfloor+\bigg\lfloor \frac{z^2}c\bigg\rfloor,$$
In particular, when $a=b=c=m$, we abbreviate $\mathcal{F}_{m,m,m}(x,y,z)$ as $\mathcal{F}_m(x,y,z)$.
Clearly the larger $a$ is, the denser $\{\lfloor x^2/a\rfloor:\, x\in\Z\}$ is.
However, as we shall see soon, the floor function $\lfloor\cdot\rfloor$ makes the properties of $\mathcal{F}_{a,b,c}(x,y,z)$ not very good, provided one of $a,b,c$ is large. In fact, even we don't know how to prove that there exist infinitely many square-free $m$ such that $\mathcal{F}_m(x,y,z)$, or $\mathcal{F}_{1,1,m}(x,y,z)$, or $\mathcal{F}_{1,m,m}(x,y,z)$, is universal. On the other hand, in this paper, we shall show that for each $m\geq 3$, Farhi's conjecture is true for every sufficiently large integer $n$.
In this paper, we shall prove that
\begin{theorem}\label{Thm a=b=c=m}
For each integer $m\ge3$, $\mathcal{F}_m(x,y,z)$ is almost universal, i.e., every sufficiently large integer $n$ can be written as
$$n=\bigg\lfloor \frac{x^2}m\bigg\rfloor+\bigg\lfloor \frac{y^2}m\bigg\rfloor+\bigg\lfloor \frac{z^2}m\bigg\rfloor$$
with $x,y,z\in\Z$.
\end{theorem}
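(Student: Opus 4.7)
The plan is to reduce the problem to representing integers as sums of three squares with congruence restrictions, and then invoke the analytic theory of half-integral weight modular forms (``congruence theta functions''). For the reduction, writing $x = mq + r$ with $0 \le r < m$ and letting $s(r)$ denote the least non-negative residue of $r^2$ modulo $m$, we have $\lfloor x^2/m\rfloor = (x^2 - s(r))/m$, hence
$$
\mathcal{F}_m(x,y,z) = n \quad\Longleftrightarrow\quad x^2+y^2+z^2 = mn + s(r_x) + s(r_y) + s(r_z),
$$
where $r_x, r_y, r_z$ are the residues of $x,y,z$ modulo $m$. Thus it suffices to show: for every sufficiently large $n$ there exist $(r_x,r_y,r_z) \in \{0,1,\ldots,m-1\}^3$ and $x,y,z \in \Z$ with $x\equiv r_x$, $y\equiv r_y$, $z\equiv r_z \pmod m$ satisfying $x^2+y^2+z^2 = N$, where $N := mn + s(r_x)+s(r_y)+s(r_z)$.

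Next, for each fixed residue triple, the restricted theta series
$$
\Theta_{r_x,r_y,r_z}(\tau) := \sum e^{2\pi i(x^2+y^2+z^2)\tau},
$$
the sum being over $x,y,z \in \Z$ with $x \equiv r_x$, $y \equiv r_y$, $z \equiv r_z \pmod m$, is a weight $3/2$ modular form on some $\Gamma_0(4m^2)$ with a suitable character. Its $N$-th Fourier coefficient decomposes into an Eisenstein main term (a product of local densities, of order $\sqrt{N}$) plus a cuspidal error of size $O_\varepsilon(N^{1/2-\delta+\varepsilon})$ by the bounds of Duke and Iwaniec. Hence for $N$ sufficiently large the number of representations is positive provided the local solvability holds at every place, i.e.\ the system $x^2+y^2+z^2 = N$ with the prescribed residues has a solution in $\R$ and in every $\Z_p$.

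At primes $p \nmid 2m$ and at $\infty$ the conditions are automatic, and at odd primes $p \mid m$ they follow from the built-in compatibility $N \equiv r_x^2+r_y^2+r_z^2 \pmod m$ combined with Hensel lifting. The genuine obstruction sits at $p=2$: the residues must be chosen so that $N$ avoids the Legendre exceptional set $\{4^k(8h+7):k,h\in\N\}$. This combinatorial step is the principal difficulty. I would carry it out by a case analysis on the $2$-adic valuation of $m$ and on $n$ modulo a small multiple of $m$, exploiting the variety of residues modulo $8$ that the sum $s(r_x)+s(r_y)+s(r_z)$ can realize as $(r_x,r_y,r_z)$ ranges over $\{0,1,\ldots,m-1\}^3$. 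Since $m\ge 3$, the multiset $\{s(r):0\le r<m\}$ takes enough values modulo $8$ to push $N$ into an admissible $2$-adic class; the most delicate part is the small-$m$ check (in particular $m=3,4$, where the available residues are few), which I expect to require explicit bookkeeping in a few sub-cases, whereas for large $m$ the abundance of residues makes a good choice easy to produce.
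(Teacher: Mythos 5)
Your reduction to $x^2+y^2+z^2=N$ with $x,y,z$ in prescribed residue classes modulo $m$, $N=mn+s(r_x)+s(r_y)+s(r_z)$, is exactly the reduction the paper uses, and the overall modular-forms strategy is also the paper's. But there is a genuine gap in the analytic step. For a weight $3/2$ theta series the decomposition is not ``Eisenstein plus a cusp form with coefficients $O_\varepsilon(N^{1/2-\delta+\varepsilon})$'': the cuspidal space contains the unary theta functions $\sum_{r\equiv h\,({\rm mod}\ N/t)} r\,e(tr^2z)$, whose $N$-th coefficients are of size $\asymp\sqrt{N}$ precisely when $N=tr^2$, and Duke's bound applies only to the component \emph{orthogonal} to these. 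So positivity of the Eisenstein term does not by itself force representability; you must either show that the projection of your restricted theta series onto the span of unary theta functions vanishes, or show that the integers $N$ you produce are never of the form $tr^2$ for the relevant squarefree $t\mid 2m$, or treat those $N$ separately. This is the actual crux of the paper's proof: the residues are chosen so that $l_m(n)=mn+a_0+b_0+c_0$ is coprime to the odd part of $m$ and lies in a congruence class modulo $4$ (or $8$) that forces $t=1$; and the remaining case $l_m(n)=x^2$ is disposed of trivially, since $0\le a_0+b_0+c_0<m$ gives $n=\lfloor x^2/m\rfloor+\lfloor 0^2/m\rfloor+\lfloor 0^2/m\rfloor$ outright (with the few small $m$ where the inequality fails covered by the known cases $m=3,5,6$). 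Without some such mechanism your argument proves nothing for the infinitely many $n$ with $N$ a square times a divisor of $2m$.

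Two smaller points. First, your claim that local solvability at odd $p\mid m$ ``follows from the built-in compatibility plus Hensel'' is too quick: if $p$ divides all three residues $r_x,r_y,r_z$, the relevant derivative is divisible by $p$ and Hensel lifting can fail; the paper's Lemma 3.1(ii) explicitly requires at least one residue prime to $p$, and this constraint must be carried along in your combinatorial choice at $2$. Second, the lower bound $a_E(N)\gg N^{1/2-\varepsilon}$ for the Eisenstein coefficient requires bounded divisibility of $N$ at the anisotropic prime $2$; your choices must also control ${\rm ord}_2(N)$, which the paper does by pinning $l_m(n)$ modulo $4$ or $8$. These are repairable, but the unary theta obstruction is a missing idea, not a detail.
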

For a positive integer $a$, let ${\mathfrak s}(a)$ denote the squarefree part of $a$, i.e., the largest squarefree divisor of $a$. Writing $a={\mathfrak s}(a)t^2$, clearly we have
\begin{equation}\label{tx2}
\bigg\lfloor \frac{(tx)^2}a\bigg\rfloor=\bigg\lfloor \frac{x^2}{{\mathfrak s}(a)}\bigg\rfloor.
\end{equation}
Thus it follows from Theorem \ref{Thm a=b=c=m} that
\begin{corollary}\label{Cor a=b=c=m}
Let $a,b,c$ be positive integers with ${\mathfrak s}(a)={\mathfrak s}(b)={\mathfrak s}(c)\geq3$. Then
$\lfloor x^2/a\rfloor+\lfloor y^2/b\rfloor+\lfloor z^2/c\rfloor$ is almost universal.
\end{corollary}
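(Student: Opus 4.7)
The plan is to derive this as an immediate consequence of Theorem \ref{Thm a=b=c=m} together with the identity (\ref{tx2}). Set $m:={\mathfrak s}(a)={\mathfrak s}(b)={\mathfrak s}(c)$, so that $m\ge 3$, and write $a=mt_a^2$, $b=mt_b^2$, $c=mt_c^2$ with $t_a,t_b,t_c\in\zp$.

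First, I would apply Theorem \ref{Thm a=b=c=m}: since $m\ge 3$, there exists $N_0=N_0(m)$ such that every integer $n\ge N_0$ admits a representation
\[
n=\bigg\lfloor \frac{u^2}{m}\bigg\rfloor+\bigg\lfloor \frac{v^2}{m}\bigg\rfloor+\bigg\lfloor \frac{w^2}{m}\bigg\rfloor
\]
with $u,v,w\in\Z$. Next, given such a representation, I would set $x:=t_a u$, $y:=t_b v$, $z:=t_c w$. By the identity (\ref{tx2}), namely $\lfloor (t_a u)^2/a\rfloor=\lfloor u^2/{\mathfrak s}(a)\rfloor=\lfloor u^2/m\rfloor$, and the analogous identities for $b$ and $c$, we obtain
\[
\bigg\lfloor \frac{x^2}{a}\bigg\rfloor+\bigg\lfloor \frac{y^2}{b}\bigg\rfloor+\bigg\lfloor \frac{z^2}{c}\bigg\rfloor
=\bigg\lfloor \frac{u^2}{m}\bigg\rfloor+\bigg\lfloor \frac{v^2}{m}\bigg\rfloor+\bigg\lfloor \frac{w^2}{m}\bigg\rfloor=n,
\]
which is the required representation. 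Since the same threshold $N_0$ works for every $n\ge N_0$, the form $\lfloor x^2/a\rfloor+\lfloor y^2/b\rfloor+\lfloor z^2/c\rfloor$ is almost universal.

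There is no real obstacle here; all the substantive work is already packaged in Theorem \ref{Thm a=b=c=m}, and the identity (\ref{tx2}) transfers representations from the squarefree-kernel case to the general case without loss. The only point worth verifying carefully is that (\ref{tx2}) can be applied simultaneously to all three coordinates with independent scaling factors $t_a,t_b,t_c$, which is clear since the three summands are independent.
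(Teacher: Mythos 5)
Your proof is correct and is essentially identical to the paper's own argument: the authors likewise write $a={\mathfrak s}(a)t^2$, invoke the identity (\ref{tx2}) in each coordinate, and deduce the corollary directly from Theorem \ref{Thm a=b=c=m}. Nothing further is needed.
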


Furthermore, for some $(a,b,c)$, we can prove that Sun's conjecture is true for every sufficiently large integer $n$.
\begin{theorem}\label{Thm a,b,c}
Suppose that the integers $a,b,c\geq 5$ are pairwisely coprime. Then the ternary sum
$$\bigg\lfloor \frac{x^2}a\bigg\rfloor+\bigg\lfloor \frac{y^2}b\bigg\rfloor+\bigg\lfloor \frac{z^2}c\bigg\rfloor$$
is almost universal.
\end{theorem}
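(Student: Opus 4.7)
My plan is to recast the floor-function equation as a classical ternary-form representation with congruence conditions, and then invoke the analytic theory of weight-$3/2$ theta series. Since $a,b,c\ge 5$, for any triple $(r,s,t)$ with $r^{2}<a$, $s^{2}<b$, $t^{2}<c$ (for instance $r=s=t=1$, or any triple drawn from $\{0,1,2\}^{3}$), the substitution $x=au+r$, $y=bv+s$, $z=cw+t$ with $u,v,w\in\Z$ yields $\lfloor x^{2}/a\rfloor=au^{2}+2ru$ and likewise for the other two coordinates. Writing $X=au+r$, $Y=bv+s$, $Z=cw+t$ and multiplying by $abc$, the equation $\mathcal F_{a,b,c}(x,y,z)=n$ becomes
$$Q(X,Y,Z)\;:=\;bcX^{2}+acY^{2}+abZ^{2}\;=\;abcn+bcr^{2}+acs^{2}+abt^{2}\;=:\;N(n;r,s,t),$$
with $X\equiv r\pmod a$, $Y\equiv s\pmod b$, $Z\equiv t\pmod c$. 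Pairwise coprimality of $a,b,c$ makes these three congruences independent via CRT.

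\textbf{Theta-series machinery.} For each admissible $(r,s,t)$, consider
$$\vartheta_{r,s,t}(\tau)\;=\sum_{\substack{X\equiv r\,(a)\\ Y\equiv s\,(b)\\ Z\equiv t\,(c)}}q^{bcX^{2}+acY^{2}+abZ^{2}}\qquad (q=e^{2\pi i\tau}),$$
which is a modular form of weight $3/2$ on a suitable congruence subgroup and decomposes as $E_{r,s,t}+S_{r,s,t}$ with $E_{r,s,t}$ Eisenstein and $S_{r,s,t}$ cuspidal. The $N$-th Fourier coefficient of $E_{r,s,t}$ is a product of local densities times $N^{1/2}$, while the $N$-th coefficient of $S_{r,s,t}$ is $O(N^{1/2-\delta})$ for some fixed $\delta>0$ by Iwaniec/Duke-type bounds for Fourier coefficients of half-integral weight cusp forms. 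Hence, once the product of local densities is bounded below by a positive constant uniformly in $n$ for at least one admissible $(r,s,t)$, the $N(n;r,s,t)$-th coefficient of $\vartheta_{r,s,t}$ is positive for every sufficiently large $n$, yielding the desired representation of $n$.

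\textbf{Local analysis and main obstacle.} At primes $p\nmid 2abc$, Hensel's lemma combined with the isotropy of $Q$ over $\Q_p$ gives a uniformly positive local density. For $p\mid abc$, pairwise coprimality ensures $p$ divides exactly one of $a,b,c$, so only one coordinate is pinned modulo $p$ and the remaining two carry the Hensel argument. At $p=2$ the local density reduces to a finite congruential check on $N(n;r,s,t)$ modulo a small power of $2$, and the flexibility to vary $(r,s,t)$ over at least two eligible residues in each coordinate (guaranteed by the hypothesis $a,b,c\ge 5$) allows one to steer $N$ past all $2$-adic obstructions. The principal difficulty is precisely this uniform positivity of the Eisenstein coefficient at $p=2$ and at the primes dividing $abc$: one must show the product of local densities is bounded below by a fixed constant, not merely positive in the limit, and that for every residue class of $n$ there exists at least one admissible $(r,s,t)$ achieving this lower bound. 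The case analysis depends sensitively on the parities and prime factorizations of $a,b,c$, and this is exactly where the hypothesis $a,b,c\ge 5$ (stronger than Farhi's $\ge 3$) is indispensable.
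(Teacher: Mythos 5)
Your reduction to the congruence-constrained ternary form $bcX^{2}+acY^{2}+abZ^{2}=abcn+bcr^{2}+acs^{2}+abt^{2}$ matches the paper's setup, but there is a genuine gap in the analytic step. You decompose $\vartheta_{r,s,t}=E_{r,s,t}+S_{r,s,t}$ and assert that the $N$-th coefficient of the cuspidal part is $O(N^{1/2-\delta})$ by Duke/Iwaniec-type bounds. That bound is \emph{false} for general weight-$3/2$ cusp forms: the unary theta series $\sum_{r\equiv h\,(N/t)} r\,e(tr^{2}z)$ are themselves cusp forms of weight $3/2$, and their coefficients at $N=tr^{2}$ are of size $r\asymp N^{1/2}$, which can cancel the Eisenstein main term. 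Duke's $O(N^{3/7+\ve})$ bound applies only to the component of the cusp form orthogonal to the span of these unary theta functions. The correct decomposition is therefore three-part, $\theta_{L+v}=\mathcal{E}+\mathcal{U}+f$, and one must separately show that the $\mathcal{U}$-contribution vanishes at the integers $N(n;r,s,t)$ under consideration. This is the central obstruction in all almost-universality results for ternary quadratic polynomials, and your argument does not address it.

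Concretely, the paper disposes of $\mathcal{U}$ by showing that for its chosen residues the target $l_{a,b,c}(n)$ is never of the form $tr^{2}$ with $t$ a squarefree divisor of $2^{\delta}abc$: pairwise coprimality forces every odd prime divisor of $abc$ to be coprime to $l_{a,b,c}(n)$, and a case analysis modulo $8$ (this is where $a,b,c\ge 5$ and the freedom to move $\alpha,\beta,\gamma$ within $\{1,2\}$, or within $\{2l,2l+1,2l+2\}$ in the harder subcases, is actually used) pins $l_{a,b,c}(n)$ into classes such as $3,5,6\pmod 8$, so it is neither a square nor twice a square. Your proposal instead spends the hypothesis $a,b,c\ge5$ only on steering $2$-adic local densities, and even there the case analysis is deferred rather than carried out. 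To repair the argument you would need (i) to replace your two-term decomposition with the Eisenstein/unary-theta/orthogonal-cusp decomposition, (ii) to prove $N(n;r,s,t)\ne tr^{2}$ for all relevant squarefree $t\mid 2^{\delta}abc$ for at least one admissible $(r,s,t)$ per residue class of $n$, and (iii) to verify bounded divisibility of $N(n;r,s,t)$ at the anisotropic primes so that the Eisenstein coefficient is genuinely $\gg N^{1/2-\ve}$.
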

Similarly, we also have the following consequence of Theorem \ref{Thm a,b,c}
\begin{corollary}\label{Cor a,b,c}
Suppose that $a,b,c$ are positive integers and  ${\mathfrak s}(a),{\mathfrak s}(b),{\mathfrak s}(c)\geq4$ are pairwisely coprime. Then
$\lfloor x^2/a\rfloor+\lfloor y^2/b\rfloor+\lfloor z^2/c\rfloor$ is almost universal.
\end{corollary}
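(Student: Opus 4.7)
The plan is to reduce Corollary~\ref{Cor a,b,c} directly to Theorem~\ref{Thm a,b,c} by exploiting the substitution identity (\ref{tx2}), in the same spirit as Corollary~\ref{Cor a=b=c=m} was deduced from Theorem~\ref{Thm a=b=c=m}. Since Theorem~\ref{Thm a,b,c} has already done all of the analytic work (via congruence theta functions and half-integral weight modular forms), the corollary should follow by a short bookkeeping argument, with essentially no obstacle.

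First I would factor each of $a,b,c$ through its squarefree part: write
$$a={\mathfrak s}(a)\,t_a^2,\qquad b={\mathfrak s}(b)\,t_b^2,\qquad c={\mathfrak s}(c)\,t_c^2$$
for suitable positive integers $t_a,t_b,t_c$. The hypothesis ${\mathfrak s}(a),{\mathfrak s}(b),{\mathfrak s}(c)\geq 4$ combined with squarefreeness actually forces ${\mathfrak s}(a),{\mathfrak s}(b),{\mathfrak s}(c)\geq 5$, because $4=2^2$ is not squarefree; this is the one small point that must be noted, and it is precisely why the statement of the corollary is compatible with the hypothesis $a,b,c\geq 5$ of Theorem~\ref{Thm a,b,c}. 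Together with the assumption that ${\mathfrak s}(a),{\mathfrak s}(b),{\mathfrak s}(c)$ are pairwise coprime, the triple $({\mathfrak s}(a),{\mathfrak s}(b),{\mathfrak s}(c))$ satisfies all hypotheses of Theorem~\ref{Thm a,b,c}.

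Next, I would apply Theorem~\ref{Thm a,b,c} to this triple: for every sufficiently large $n\in\N$, there exist $u,v,w\in\Z$ such that
$$n=\bigg\lfloor\frac{u^2}{{\mathfrak s}(a)}\bigg\rfloor+\bigg\lfloor\frac{v^2}{{\mathfrak s}(b)}\bigg\rfloor+\bigg\lfloor\frac{w^2}{{\mathfrak s}(c)}\bigg\rfloor.$$
Setting $x=t_a u$, $y=t_b v$, $z=t_c w$ and invoking (\ref{tx2}) term by term yields
$$n=\bigg\lfloor\frac{x^2}{a}\bigg\rfloor+\bigg\lfloor\frac{y^2}{b}\bigg\rfloor+\bigg\lfloor\frac{z^2}{c}\bigg\rfloor,$$
which is the required representation. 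Thus the proof is a two-line deduction once Theorem~\ref{Thm a,b,c} is in hand, and there is no genuine obstacle: the only thing to verify carefully is the automatic upgrade from ${\mathfrak s}(\cdot)\geq 4$ to ${\mathfrak s}(\cdot)\geq 5$, after which the substitution identity (\ref{tx2}) does all the work.
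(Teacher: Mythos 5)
Your proposal is correct and is exactly the deduction the paper intends: the corollary is stated as an immediate consequence of Theorem~\ref{Thm a,b,c} via the identity (\ref{tx2}), and your observation that a squarefree number $\geq 4$ is automatically $\geq 5$ is precisely the point that reconciles the corollary's hypothesis with the theorem's. No issues.
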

One way to deal with $\lfloor x^2/a\rfloor$ is to find a suitable residue $\alpha\pmod{a}$. Notice that
$$
\bigg\lfloor \frac{(ax+\alpha)^2}a\bigg\rfloor=ax^2+2\alpha x+\bigg\lfloor \frac{\alpha^2}a\bigg\rfloor.
$$
So by choosing some residues $\alpha\pmod{a}$, $\beta\pmod{b}$ and $\gamma\pmod{c}$, the ternary sum $\lfloor x^2/a\rfloor+\lfloor y^2/b\rfloor+\lfloor z^2/c\rfloor$ can be reduced to inhomogeneous quadratic polynomial
$$
x(ax+2\alpha)+y(by+2\beta)+z(cz+2\gamma).
$$

The arithmetic theory of inhomogeneous quadratic polynomial is a classical topic in number theory and has been extensively investigated. There are two useful tools for the inhomogeneous quadratic polynomial. One is the theory of shifted lattice, which is developed by  van der Blij \cite{V} and M. Kneser \cite{Kn}. The other is the the theory of congruence theta functions, which was established by
Shimura \cite{Shimura1,Shimura2}. For the recent progress on the universality and almost universality of inhomogeneous quadratic polynomials, the reader may refer to
\cite{WKCOH,WKCANNA,GS,ANNA,ANNA2,KS,Sun15b,Sun17}.

Now, we give an outline of this paper. In Section 2, we will give a brief overview of the theory of shifted lattice and congruence theta functions. Meanwhile, we shall prove some useful lemmas. Finally, we shall prove
Theorem \ref{Thm a=b=c=m} and Theorem \ref{Thm a,b,c} in Section 3 and Section 4 respectively.
\maketitle
\section{Preliminaries}
\setcounter{lemma}{0}
\setcounter{theorem}{0}
\setcounter{corollary}{0}
\setcounter{remark}{0}
\setcounter{equation}{0}
\setcounter{conjecture}{0}

In this section, we first introduce some necessary objects used in our proofs and then clarify the connections
between congruence theta function and representation of natural numbers by $\mathcal{F}_{a,b,c}(x,y,z)$.
\subsection{Shifted Lattice Theory}
In this paper, we adopt the language of quadratic space and lattices as in \cite{Ki}. Given a
positive definite quadratic space $(V,B,Q)$ over $\Q$, let $L$ be a lattice on $V$ and $A$ be a symmetric matrix.
We denote $L\cong A$ if $A$ is the gram matrix for $L$ with respect to some basis. In particular,
an $n\times n$ diagonal matrix with $a_1,...,a_n$ as the diagonal entries is written as $\langle a_1,...,a_n\rangle$.
Let $\Omega$ denote the set of all places of $\Q$, for each place $p\in\Omega$, we define the localization of $L$ by
$L_p=L\otimes_{\Z}\Z_p$, especially, $L_{\infty}=V\otimes_{\Q}\R$, and the adelization of $O^+(V)$ is the set
$$O^+_{\mathbb{A}}(V):=\{(\sigma_v)\in\prod_{v\in\Omega}O^+(V_v): \sigma_p(L_p)=L_p\ \text{for almost all primes}\ p\}.$$
We also set
$$O'_{\mathbb{A}}(V):=\{(\sigma_v)\in O^+_{\mathbb{A}}(V): \sigma_v\in O'(V_v)\ \text{for each place}\ v\},$$
where $O'(V_v)$ denotes the kernel of the spinor norm from $O^+(V_v)$ to $\Q_v^{\times}/\Q_v^{\times2}$.
For more details of the adelization groups $O^+(V)$, $O'_{\mathbb{A}}(V)$, $O^+_{\mathbb{A}}(V)$, readers may consult \cite[p.132]{Ki}.

For a vector $v\in V$, we have a lattice coset $L+v$. Similar to the definitions of
$\cls(L),\ \gen(L),\ \spn(L)$ (cf. \cite[pp.132--133]{Ki}), we have the following definitions that
originally appear in \cite{V}.

The proper class of $L+v$ is defined as
\begin{align*}
\cls^+(L+v):= \text{the orbit of $L+v$ under the action of $O^+(V)$},
\end{align*}

the proper genus of $L+v$ is defined as
\begin{align*}
\gen^+(L+v):= \text{the orbit of $L+v$ under the action of $O^+_{\mathbb{A}}(V)$},
\end{align*}

the proper spinor genus of $L+v$ is defined as
\begin{align*}
\spn^+(L+v):= \text{the orbit of $L+v$ under the action of $O^+(V)O'_{\mathbb{A}}(V)$},
\end{align*}

we now consider the representation of a natural number $n$ by $\mathcal{F}_{a,b,c}(x,y,z)$. Put
\begin{equation}\label{delta}
\delta=\begin{cases}1&\mbox{if}\ 2\nmid (a,b,c),\\0&\mbox{otherwise}.\end{cases}
\end{equation}
Given any
$\alpha,\beta,\gamma\in\Z$, there exists a unique triple $(a_0,b_0,c_0)\in\Z^3$ satisfying the following conditions:
$0\le a_0<a$, $0\le b_0<b$, $0\le c_0<c$, $a_0\equiv \alpha^2\pmod a$, $b_0\equiv \beta^2\pmod b$,
$c_0\equiv \gamma^2\pmod c$. Then by virtue of the discussion under the corollary \ref{Cor a,b,c},
it is easy to see that $n$ can be represented by $\mathcal{F}_{a,b,c}$ if
and only if there are $\alpha,\beta,\gamma\in\Z$ such that $l_{a,b,c}(n)$ can be represented by the lattice
coset $L+v$, where
\begin{equation}\label{Lattice a,b,c}
L=4^{\delta}abc\langle a,b,c\rangle
\end{equation}
with respect to the orthogonal basis $\{e_1,e_2,e_3\}$,
\begin{equation}\label{vector a,b,c}
v=2^{-\delta}(\frac{\alpha}{a}e_1+\frac{\beta}{b}e_2+\frac{\gamma}{c}e_3),
\end{equation}
and
\begin{equation}\label{l_a,b,c,n}
l_{a,b,c}(n)=abcn+bca_0+cab_0+abc_0.
\end{equation}
In particular, when $a=b=c=m$, we can simplify our problem, and one may easily verify that
$n$ can be represented by $\mathcal{F}_m$ if
and only if there are $\alpha,\beta,\gamma\in\Z$ such that $l_m(n)$ can be represented by the lattice
coset $M+u$, where
\begin{equation}\label{Lattice a=b=c=m}
M=4^{\delta}m^2\langle 1,1,1\rangle
\end{equation}
with respect to the orthogonal basis $\{e_1,e_2,e_3\}$,
\begin{equation}\label{vector a=b=c=m}
u=\frac{1}{2^{\delta}m}(\alpha e_1+\beta e_2+\gamma e_3),
\end{equation}
and
\begin{equation}\label{l_m,n}
l_m(n)=mn+a_0+b_0+c_0.
\end{equation}

For each rational prime $p$, let $\Z_p$ denote the set of all $p$-adic integers and $\Z_p^{\times}$ denote the
set of all $p$-adic units. For the convenience of readers, we state the result of Local Square Theorem (cf. \cite[Lemma 1.6]{C}) here, and we will use
this result to prove Lemma \ref{Lem local}.

{\bf Local Square Theorem} For each prime $p$, let $\alpha\in\Q_p^{\times}$ and let $\ord_p$ denote the $p$-adic order.
Suppose that $\ord_p(\alpha-\ve^2)\ge\ord_p(4p)$
for some $\ve\in\Z_p^{\times}$. Then $\alpha$ is a square in $\Z_p^{\times}$.

We are now in a position to state the following lemma involving local representation.

\begin{lemma}\label{Lem local}
{\rm (i)} Given an odd prime $p$, for any $\ve$, $\eta\in\Z_p^{\times}$ and integer $k\ge1$, we have
$$\Z_p=\{p^k\ve x^2+\eta x: x\in\Z_p\}.$$

{\rm (ii)} Let $\ve$, $\eta\in\Z_2^{\times}$ and integer $k\ge2$, we have
$$4\Z_2\subseteq\{2\ve x^2+2\eta x: x\in\Z_2\}$$
and
$$2\Z_2\subseteq\{2^k\ve x^2+2\eta x: x\in\Z_2\}.$$
\end{lemma}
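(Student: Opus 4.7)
The plan for all three statements is identical: I would complete the square to turn the equation $Ax^2 + Bx = a$ into $(2Ax + B)^2 = B^2 + 4Aa$, and then quote the Local Square Theorem to extract $\mu \in \Z_p^{\times}$ with $\mu^2 = B^2 + 4Aa$. A solution $x \in \Z_p$ then corresponds to such a $\mu$ together with the integrality condition $\mu - B \in 2A \cdot \Z_p$, which I would verify from the factorization $(\mu - B)(\mu + B) = 4Aa$ after suitably choosing the sign of $\mu$.

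For part (i) I take $A = p^k \ve$, $B = \eta$, and any $a \in \Z_p$. With $p$ odd and $k \ge 1$, the valuation bound $\ord_p(4 p^k \ve a) \ge k \ge 1 = \ord_p(4p)$ is immediate, so the Local Square Theorem provides $\mu \in \Z_p^{\times}$ with $\mu^2 = \eta^2 + 4 p^k \ve a$. Because $p$ is odd, $\mu^2 \equiv \eta^2 \pmod p$ forces $\mu \equiv \pm\eta \pmod p$; replacing $\mu$ by $-\mu$ if necessary, I arrange $\mu + \eta \in \Z_p^{\times}$, so $\mu - \eta \in p^k \Z_p$ follows from $(\mu-\eta)(\mu+\eta) = 4 p^k \ve a$. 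Then $x := (\mu - \eta)/(2 p^k \ve) \in \Z_p$ and a direct check yields $p^k \ve x^2 + \eta x = a$.

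For part (ii) I would run the same scheme with $p = 2$. For the first inclusion, $a = 2b \in 4 \Z_2$ with $b \in 2\Z_2$ reduces the equation to $(2\ve x + \eta)^2 = \eta^2 + 4\ve b$; since $\ord_2(4\ve b) \ge 3 = \ord_2(8)$, the Local Square Theorem supplies $\mu \in \Z_2^{\times}$, and $\mu - \eta \in 2\Z_2$ is automatic because $\mu, \eta$ are both odd, so $x := (\mu - \eta)/(2\ve) \in \Z_2$. For the second, $a = 2b \in 2\Z_2$ with $b \in \Z_2$ transforms the equation into $(2^k \ve x + \eta)^2 = \eta^2 + 2^{k+1} \ve b$; the hypothesis $k \ge 2$ gives $\ord_2(2^{k+1} \ve b) \ge k+1 \ge 3$, and the Local Square Theorem again applies to give a $\mu \in \Z_2^{\times}$.

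The only step I expect to need genuine care is confirming that in the second inclusion the resulting $x := (\mu - \eta)/(2^k \ve)$ lies in $\Z_2$, i.e.\ that $\mu - \eta \in 2^k \Z_2$. Here $2$-adic parity enters: since $\mu$ and $\eta$ are both odd, $(\mu - \eta) + (\mu + \eta) = 2\mu \equiv 2 \pmod 4$, so exactly one of $\mu \pm \eta$ has $2$-adic valuation $1$ while the other absorbs every additional factor of $2$ in the product $(\mu - \eta)(\mu + \eta) = 2^{k+1} \ve b$. By replacing $\mu$ with $-\mu$ if necessary to force $\mu + \eta \equiv 2 \pmod 4$, I obtain $\ord_2(\mu - \eta) \ge (k+1) - 1 = k$, and direct substitution then confirms $2^k \ve x^2 + 2\eta x = a$. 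This sign-choice bookkeeping is the only non-routine point in the argument; everything else is mechanical expansion.
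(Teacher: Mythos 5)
Your proof is correct and follows essentially the same route as the paper: complete the square (equivalently, the quadratic formula), invoke the Local Square Theorem on the discriminant, and then check $2$-adic integrality of the resulting root. The only cosmetic differences are that the paper disposes of part (i) by a direct appeal to Hensel's lemma rather than re-running the square-root argument, and in part (ii) it organizes the valuation bookkeeping by writing the square root as $\eta(1+2^{s}\mu)$ and casing on $s$, instead of your equivalent factorization $(\mu-\eta)(\mu+\eta)=4Aa$ with a sign choice.
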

\begin{proof}
(i) By Hensel's Lemma, it is clear that $p^k\ve x^2+\eta x$ can represent all $p$-adic integers over $\Z_p$.

(ii) For each $n\in\Z_2$, the equation
\begin{equation}\label{equation A}
2\ve x^2+2\eta x=4n
\end{equation}
has a solution $x$ in the algebraic closure of $\Q_2$, where
\begin{equation*}
x=\frac{-\eta\pm\sqrt{\eta^2+8n\ve}}{2\ve}=\frac{-1\pm\sqrt{1+8n\ve/\eta^2}}{2\ve/\eta}.
\end{equation*}
By Local Square Theorem, we may write
$\eta^2+8n\ve=\eta^2(1+2^s\mu)^2$ with $s\ge 1$ and $\mu\in\Z_2^{\times}$. Thus
\begin{equation*}
x=\frac{-\eta+\eta(1+2^s\mu)}{2\ve}=2^{s-1}\mu\eta/\ve\in\Z_2
\end{equation*}
is a solution of (\ref{equation A}).
Now, we consider the equation
\begin{equation}\label{equation B}
2^k\ve x^2+2\eta x=2n.
\end{equation}
Then it has a solution $x$ in the algebraic closure of $\Q_2$, where
\begin{equation*}
x=\frac{-\eta\pm\sqrt{\eta^2+2^{k+1}n\ve}}{2^k\ve}=\frac{-1\pm\sqrt{1+2^{k+1}n\ve/\eta^2}}{2^k\ve/\eta}.
\end{equation*}
Since $k\ge2$,
by Local Square Theorem, we may write $1+2^{k+1}n\ve/\eta^2=(1+2^s\mu)^2$ with $\mu\in\Z_2^{\times}$ and $s\ge1$. If $s=1$, then we have $2^{k-1}n\ve/\eta^2=\mu(1+\mu)$, this implies that $\ord_2(1+\mu)\ge k-1$, thus
\begin{equation*}
x=\frac{-1-(1+2\mu)}{2^k\ve/\eta}=\frac{(1+\mu)\eta}{2^{k-1}\ve}\in\Z_2
\end{equation*}
is a solution of (\ref{equation B}).
If $s>1$, then we have $2^{k}n\ve/\eta^2=2^{s}\eta(1+2^{s-1}\eta)$, this implies that $s\ge k$, thus
\begin{equation*}
x=\frac{-1+(1+2^s\mu)}{2^k\ve/\eta}=2^{s-k}\mu\eta/\ve\in\Z_2
\end{equation*}
is a solution of (\ref{equation B}).

In view of the above, we complete the proof.
\end{proof}

\subsection{Congruence Theta Functions}
In \cite{Shimura1} Shimura investigated the modular forms of half-integral weights and explicit
transformation formulas for the theta functions. Let $n$ and $N$ be positive integers, let $A$ be an $n\times n$ integral positive definite symmetric matrix with $NA^{-1}$ be an integral matrix. Given a non-negative integer $l$, let $P$ be a spherical function of order $l$ with respect to $A$. For a column vector $h\in\Z^n$ with $Ah\in N\Z^n$, we consider the following theta function (\cite[2.0]{Shimura1}) with variable $z\in\mathbb{H}$ (where $\mathbb{H}$ denotes the upper half plane),
\begin{equation}\label{theta function}
\theta(z,h,A,N,P)=\sum_{m\equiv h\pmod N}P(m)\cdot e(z\cdot m^tAm/2N^2),
\end{equation}
where the summation runs over all $m\in\Z^n$ with $m\equiv h\pmod N$, and  
$m^t$ denotes the transposed matrix of $m$ and $e(z)=e^{2\pi iz}.$ We usually call the theta function of this type {\it congruence theta function}. When $P=1$,
we simply write $\theta(z,h,A,N)$ instead of $\theta(z,h,A,N,P)$.
By \cite[Propostion 2.1]{Shimura1}, if the diagonal elements of $A$ are even, then
$\theta(z,h,A,N,P)$ is a modular form of weight $n/2$ on $\Gamma_1(2N)$ with some multiplier.

Congruence theta functions have close connection with the representations of natural numbers by quadratic forms with some additional congruence conditions. To see this, we turn to the representation of natural number $n$ by $\mathcal{F}_{a,b,c}(x,y,z)$.
Let $L$ and $v$ be as in (\ref{Lattice a,b,c}) and (\ref{vector a,b,c}) respectively. We define
\begin{equation}\label{theta F}
\theta_{L+v}(z):=\sum_{n\ge0}r_{L+v}(n)e(nz),
\end{equation}
where $z\in\mathbb{H}$, and $r_{L+v}(n):=\#\{x\in L: Q(x+v)=n\}$ ($\#S$ denotes the cardinality of a finite set $S$).
On the other hand,
put
$$N=2^{\delta}abc,
A=2^{\delta}\langle a,b,c\rangle,
h=(bc\alpha,ca\beta,ab\gamma)^t,$$
where $\delta$ is as in (\ref{delta}).

In particular, when $a=b=c=m$, we simply set
$$N=2^{\delta}m, A=2^{\delta}\langle m,m,m\rangle, h=(\alpha,\beta,\gamma)^t.$$

Then one may easily verify that $\theta_{L+v}(z)=\theta(2Nz,h,A,N)$, thus $\theta_{L+v}(z)$ is a modular form
of weight $3/2$ on $\Gamma_1(4N^2)$. In addition, we need the following unary congruence theta functions,
let $N=2^{\delta}abc$ with $\delta$ as in (\ref{delta}), and let $N=2^{\delta}m$ if $a=b=c=m$. 
For any positive squarefree factor $t$ of $N$, we further set $A=(N/t)$ and $P(n)=n$. We now define
\begin{equation}\label{unary}
u_{h,t}(z):=\theta(2Nz,h,A,N/t,P)=\sum_{r\equiv h\pmod {N/t}}re(tr^2z),
\end{equation}
where $h$ be chosen modulo $N/t$ and the summation runs over all integers $r$ such that $r\equiv h\pmod {N/t}$. Then $u_{h,t}(z)$ is a modular form of weight $3/2$ on $\Gamma_1(4N^2)$ with the same multiplier
of $\theta_{L+v}(z)$.

As in \cite{WR,AK}, we may decompose $\theta_{L+v}(z)$ into the following three parts
\begin{equation}\label{decomposition}
\theta_{L+v}(z)=\mathcal{E}(z)+\mathcal{U}(z)+f(z),
\end{equation}
where $\mathcal{E}(z)$ is in the space generated by Eisenstein series, $\mathcal{U}(z)$ is in the space generated by unary theta functions defined in (\ref{unary}), and $f(z)$ is a cusp form which is orthogonal to those unary theta functions.

We first study the function $\mathcal{E}(z)$. Shimura \cite{Shimura2} proved that
$\mathcal{E}(z)$ is the weighted average of representations by the members of the genus of $L+v$, and simultaneously the product of local densities. Thus if $n$ can be represented by $L+v$ locally and has bounded divisibility at each anisotropic prime of $V$, then for any $\ve>0$, its $n$-th fourier coefficient $a_E(n)\gg n^{1/2-\ve}$. By virtue of \cite{Duke}, the $n$-th fourier coefficient of $f(z)$ grows at most like
$n^{3/7+\ve}$. Hence, let $\mathcal{S}\subseteq\N$ be an infinite set. For each $n\in \mathcal{S}$, suppose that  $l_{a,b,c}(n)$ is represented by $L+v$ locally and has bounded divisibility at each anisotropic prime of $V$, and that
the $l_{a,b,c}(n)$-th fourier coefficient of $\mathcal{U}$ is equal to zero. Then 
it is easy to see that every sufficiently large integer $n\in \mathcal{S}$ can be written as
$$\lfloor x^2/a\rfloor+\lfloor y^2/b\rfloor+\lfloor z^2/c\rfloor,$$
with $x,y,z\in\Z$.

Since $\mathcal{U}(z)$ is a linear combination of finitely many unary congruence theta functions defined in
(\ref{unary}), thus the possible $u_{h,t}(z)$ in the decomposition with non-zero coefficient must satisfy
\begin{equation}\label{tr^2}
\begin{cases}l_{a,b,c}(n)=tr^2\equiv0\pmod t,\\t\mid (l_{a,b,c}(n),\ 2^{\delta}abc).\end{cases}
\end{equation}
In particular, when $a=b=c=m$, the possible $u_{h,t}(z)$ in the decomposition with non-zero coefficient must satisfy
\begin{equation}\label{tr^2, m}
\begin{cases}l_{m}(n)=tr^2\equiv0\pmod t,\\t\mid (l_{m}(n),\ 2^{\delta}m).\end{cases}
\end{equation}

In view of the above, set
\begin{align*}
\mathcal{S}_{a,b,c}^t=&\{n\in\N: l_{a,b,c}(n)=tr^2\ \text{for some $r\in\N$}\},\\
\mathcal{S}_{a,b,c}=&\bigcup_{\substack{t\mid N\\ \text{t squarefree}}}\mathcal{S}_{a,b,c}^t.
\end{align*}
In particular, when $a=b=c=m$, we simply write $\mathcal{S}_m^t$ and $\mathcal{S}_m$ instead of $\mathcal{S}_{a,b,c}^t$ and $\mathcal{S}_{a,b,c}$ respectively.
We have the following lemma.
\begin{lemma}\label{Lem summary}
Let notations be as above and $\mathcal{S}\subseteq\N$ be an infinite set satisfying
$\#(\mathcal{S}\cap\mathcal{S}_{a,b,c})<\infty$. Suppose that for almost all $n\in \mathcal{S}$,
$l_{a,b,c}(n)$ can be represented by $L+v$ locally and has bounded divisibility at each anisotropic prime of $V$, then every sufficiently large integer $n\in\mathcal{S}$ can be written as
$$\lfloor x^2/a\rfloor+\lfloor y^2/b\rfloor+\lfloor z^2/c\rfloor,$$
with $x,y,z\in\Z$.
\end{lemma}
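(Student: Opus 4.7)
The plan is to exploit the decomposition (\ref{decomposition}),
$$\theta_{L+v}(z)=\mathcal{E}(z)+\mathcal{U}(z)+f(z),$$
by extracting the $l_{a,b,c}(n)$-th Fourier coefficient on both sides. Writing $a_{\mathcal{E}}(M)$, $a_{\mathcal{U}}(M)$, $a_f(M)$ for the $M$-th Fourier coefficients of the three pieces,
$$r_{L+v}(l_{a,b,c}(n))=a_{\mathcal{E}}(l_{a,b,c}(n))+a_{\mathcal{U}}(l_{a,b,c}(n))+a_f(l_{a,b,c}(n)).$$
It suffices to prove this sum is strictly positive for almost every $n\in\mathcal{S}$; by the correspondence set up in (\ref{Lattice a,b,c})--(\ref{l_a,b,c,n}), such a coset representation produces integers $x,y,z$ with $n=\mathcal{F}_{a,b,c}(x,y,z)$.

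First I would dispose of the unary contribution. By (\ref{unary}), the function $u_{h,t}(z)$ is Fourier-supported on nonnegative integers of the form $tr^2$, and by (\ref{tr^2}) the only $u_{h,t}$'s that can appear in $\mathcal{U}(z)$ with nonzero coefficient are those with $t$ a squarefree divisor of $2^{\delta}abc$. Hence if $n\notin\mathcal{S}_{a,b,c}$, then $a_{\mathcal{U}}(l_{a,b,c}(n))=0$. The hypothesis $\#(\mathcal{S}\cap\mathcal{S}_{a,b,c})<\infty$ then ensures $a_{\mathcal{U}}(l_{a,b,c}(n))=0$ for all but finitely many $n\in\mathcal{S}$.

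Next, for the Eisenstein and cuspidal pieces I invoke the bounds already cited in the paragraph preceding the lemma. By Shimura's formula, $a_{\mathcal{E}}(l_{a,b,c}(n))$ is the weighted average of representation numbers over $\gen^+(L+v)$, which factorizes as a product of local densities. Under the hypothesis that $l_{a,b,c}(n)$ is locally represented by $L+v$ and has bounded divisibility at each anisotropic prime of $V$, this yields the uniform lower bound
$$a_{\mathcal{E}}(l_{a,b,c}(n))\gg l_{a,b,c}(n)^{1/2-\ve}$$
for any $\ve>0$. By Duke's subconvexity estimate for Fourier coefficients of weight-$3/2$ cusp forms that are orthogonal to the unary theta series,
$$|a_f(l_{a,b,c}(n))|\ll l_{a,b,c}(n)^{3/7+\ve}.$$

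Finally I would combine these inputs. Since $l_{a,b,c}(n)=abc\cdot n+O(1)\to\infty$ as $n\to\infty$, and since $1/2>3/7$, one can choose $\ve$ so small that the Eisenstein lower bound dominates the cuspidal upper bound for all sufficiently large $n$. Together with the vanishing of $a_{\mathcal{U}}(l_{a,b,c}(n))$ for cofinitely many $n\in\mathcal{S}$, this forces $r_{L+v}(l_{a,b,c}(n))>0$ for every sufficiently large $n\in\mathcal{S}$, proving the lemma. The main subtlety is ensuring the \emph{uniformity} in $n$ of both the local density lower bound and of Duke's estimate; both are standard once the hypotheses exclude anisotropic obstructions and local obstructions, which is exactly what we have assumed, so no genuinely new work is required here.
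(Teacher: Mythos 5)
Your proposal is correct and follows essentially the same route as the paper: the paper states this lemma as a summary of the discussion immediately preceding it, which consists of exactly the same three ingredients you use — Shimura's local-density lower bound $a_{\mathcal{E}}(n)\gg n^{1/2-\ve}$ under local representability and bounded divisibility at anisotropic primes, Duke's bound $n^{3/7+\ve}$ for the cusp form orthogonal to unary theta series, and the vanishing of the unary contribution for $n\notin\mathcal{S}_{a,b,c}$ via (\ref{tr^2}). No substantive difference.
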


\maketitle
\section{Proof of Theorem \ref{Thm a=b=c=m}}
\setcounter{lemma}{0}
\setcounter{theorem}{0}
\setcounter{corollary}{0}
\setcounter{remark}{0}
\setcounter{equation}{0}
\setcounter{conjecture}{0}

In this Section, we adopt the notations as in Section 1 and Section 2. As in the discussion of Section 2, we need to find suitable
$\alpha,\beta,\gamma$ such that $M+u$ can represent almost all natural numbers.
We first need the following lemma involving local representation.
\begin{lemma}\label{Lem p>2}
{\rm (i)} For each odd prime $p$ not dividing $m$, then $l_m(n)$ is represented by $M_p+u$.

{\rm (ii)} For each odd prime $p$ dividing $m$, if at least one of $\alpha,\beta,\gamma$ is not divisible by $p$,
then $l_m(n)$ is represented by $M_p+u$.
\end{lemma}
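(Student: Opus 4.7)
The plan is to reduce the local representation question in both parts to a one-variable equation of the form treated in Lemma \ref{Lem local}. For $x = v_1 e_1 + v_2 e_2 + v_3 e_3 \in M_p$, a direct expansion gives
$$Q(x+u) = (2^\delta m v_1 + \alpha)^2 + (2^\delta m v_2 + \beta)^2 + (2^\delta m v_3 + \gamma)^2,$$
so I must find $v_1, v_2, v_3 \in \Z_p$ making the right-hand side equal to $l_m(n)$.

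For part~(i), since $p$ is odd and $p \nmid m$, the scalar $2^\delta m$ is a $p$-adic unit, hence $u \in M_p$ and $M_p + u = M_p$. Dividing out the unit $4^\delta m^2$, the question collapses to whether $x^2+y^2+z^2$ is universal over $\Z_p$ for odd $p$. This is classical: pick $v_3 \in \Z_p$ so that the remaining target is a $p$-adic unit (possible since the squares do not exhaust $\F_p$), and then use that $v_1^2+v_2^2$ represents every unit of $\Z_p$, which follows from a counting argument in $\F_p$ combined with Hensel's lemma.

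For part~(ii), assume without loss of generality that $p \nmid \alpha$, and write $m = p^k m'$ with $k \ge 1$ and $p \nmid m'$. Setting $v_2 = v_3 = 0$, the equation reduces to
$$(2^\delta m v_1 + \alpha)^2 = l_m(n) - \beta^2 - \gamma^2.$$
The congruence $l_m(n) \equiv a_0+b_0+c_0 \equiv \alpha^2+\beta^2+\gamma^2 \pmod m$, built into the definition of $l_m(n)$, shows that after subtracting $\alpha^2$ the right-hand side lies in $m\Z_p$; dividing through by $2^\delta m$ therefore yields
$$p^k (2^\delta m')\, v_1^2 + (2\alpha)\, v_1 = c$$
for some $c \in \Z_p$. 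Here $\ve := 2^\delta m'$ is a $p$-adic unit, and the delicate point is that $\eta := 2\alpha$ must also be a $p$-adic unit so that Lemma \ref{Lem local}(i) applies; this is exactly what the hypothesis $p \nmid \alpha$ (together with $p$ odd) delivers, and the same argument is run on the appropriate coordinate if $\beta$ or $\gamma$ plays the role of the unit instead.
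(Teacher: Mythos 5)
Your proposal is correct and follows essentially the same route as the paper: part (ii) is the identical reduction to the one-variable equation $p^k\ve x^2+\eta x=c$ with $\ve,\eta\in\Z_p^{\times}$ handled by Lemma \ref{Lem local}(i), using the congruence $a_0\equiv\alpha^2\pmod m$ built into $l_m(n)$. For part (i) the paper simply quotes that the unimodular lattice $M_p$ splits off a hyperbolic plane and is therefore universal over $\Z_p$, whereas you reprove this standard fact directly by a pigeonhole count of squares in $\F_p$ plus Hensel's lemma; this is only a more elementary justification of the same step, not a different argument.
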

\begin{proof}
(i) In this case, $M_p+u=M_p$, and $M_p$ is unimodular, thus
$$M_p\cong \begin{pmatrix} 0 & 1 \\1 & 0 \end{pmatrix}\perp\langle-d(M)\rangle,$$
where $d(M)$ denotes the discriminant of $M$, clearly, $M_p$ can represent all $p$-adic integers over $\Z_p$.

(ii) Without loss of generality, we may assume that $p\nmid \alpha$. By (i) of Lemma \ref{Lem local}, for each
$n\in\Z_p$,
$n+(a_0+b_0+c_0-\alpha^2-\beta^2-\gamma^2)/m$ is represented by $4^{\delta}mx^2+2^{\delta+1}\alpha x$, thus
$l_m(n)$ is represented by $(2^{\delta}mx+\alpha)^2+(2^{\delta}my+\beta)^2+(2^{\delta}mz+\gamma)^2$, i.e.,
$l_m(n)$ can be represented by $M_p+u$.
\end{proof}

\noindent{\it\bf Proof of Theorem 1.1}. We will divide the proof into following three cases.

{\it Case} 1. $\ord_2m=1$.

In this case, since Sun \cite{Sun15} proved that each natural number can be represented by $\mathcal{F}_6$,
we may assume that $m\ge10$.
We first consider the local representation of $l_m(n)$ by $M_2+u$. Via computation, it is easy to see that
$l_m(n)$ is represented by $M_2+u$ if and only if $l^*_m(n):=n+(a_0+b_0+c_0-\alpha^2-\beta^2-\gamma^2)/m$ is represented
by $f^*(x,y,z):=mx^2+2\alpha x+my^2+2\beta y+mz^2+2\gamma z$ over $\Z_2$.

When $n$ is even, we put $a_0=\alpha=1$, $b_0=\beta=c_0=\gamma=0$, then $l^*_m(n)=n$. By (ii) of Lemma \ref{Lem local},
$n-m\ve^2$ is represented by $mx^2+2\alpha x$ over $\Z_2$, i.e., $l^*_m(n)$ is represented by $f^*(x,y,z)$ over $\Z_2$,
where
$$\ve=\begin{cases}1&\mbox{if}\ n\equiv2\pmod4,\\0&\mbox{if}\ n\equiv0\pmod4.\end{cases}$$
Combining Lemma \ref{Lem p>2}, $l_m(n)=mn+1$ is represented by $M_p+u$ for each prime $p$ and $l_m(n)\equiv 1\pmod4$.

When $n$ is odd, for convenience, we set $m=2(2l+1)$ with $l\in\Z^+$. We further put
\begin{equation*}
k=\begin{cases}l&\mbox{if}\ m\equiv6\pmod8,\\l-1&\mbox{if}\ m\equiv2\pmod8,\end{cases}
\end{equation*}
\begin{equation*}
\gamma=\begin{cases}m/2&\mbox{if}\ m\equiv6\pmod8,\\m/2-1&\mbox{if}\ m\equiv2\pmod8,\end{cases}
\end{equation*}
and
\begin{equation*}
c_0=\gamma^2-km=\begin{cases}m/2&\mbox{if}\ m\equiv6\pmod8,\\m/2+1&\mbox{if}\ m\equiv2\pmod8.\end{cases}
\end{equation*}
Note that $k$ is odd and $0\le c_0<m$. Let $k,\gamma,c_0$ be as above, and put
$\alpha=\frac{3+(-1)^{(m+2)/4}}{2}$, $a_0=\alpha^2$, $\beta=b_0=0$, then $l^*_m(n)=n-k$. Since $k$ is odd, clearly, 
$n-k-m(\frac{1-(-1)^{(n-k)/2}}{2})^2\equiv0\pmod4$. Note that either $\alpha$ or $\gamma$ is odd and that
$\alpha\in\{1,2\}$, thus by (ii) of
Lemma \ref{Lem local} and Lemma \ref{Lem p>2}, $l_m(n)=mn+m/2+3+(-1)^{(m+2)/4}$ is represented by $M_p+u$ for each prime $p$ and $l_m(n)\equiv 1\pmod4$.

In view of the above, any odd prime divisor of $m$ is prime to $l_m(n)$. By (\ref{tr^2, m}) and the fact that $l_m(n)\equiv 1\pmod4$, we must have
$\mathcal{S}_m=\mathcal{S}_m^1$. If $n\not\in\mathcal{S}_m^1$, then by Lemma \ref{Lem summary}, each sufficiently large $n$ can be represented by $\mathcal{F}_m$. If $n\in\mathcal{S}_m^1$, that is,
$mn+a_0+b_0+c_0=x^2$ for some $x\in\Z$, by the above choice of $a_0,b_0,c_0$ and the assumption that $m\ge10$, we have $0\le a_0+b_0+c_0<m$, this implies that
$n=\lfloor x^2/m\rfloor+\lfloor 0^2/m\rfloor+\lfloor 0^2/m\rfloor$.
\medskip

{\it Case} 2. $\ord_2m\ge2$.

As in the Case 1, it is easy to see that
$l_m(n)$ is represented by $M_2+u$ if and only if $l^*_m(n)=n+(a_0+b_0+c_0-\alpha^2-\beta^2-\gamma^2)/m$ is represented
by $f^*(x,y,z)=mx^2+2\alpha x+my^2+2\beta y+mz^2+2\gamma z$ over $\Z_2$.

When $n$ is even, put $\alpha=a_0=1$, $\beta=\gamma=b_0=c_0=0$, then $l^*_m(n)=n$. By (ii) of Lemma \ref{Lem local} and Lemma \ref{Lem p>2}, $l_m(n)=mn+1$ is represented by $M_p+u$ for each prime $p$ and $l_m(n)\equiv1\pmod4$.

When $n$ is odd, we set
\begin{equation*}
k=\begin{cases}m/4&\mbox{if}\ \ord_2m=2,\\m/4-1&\mbox{otherwise},\ \end{cases}
\end{equation*}
\begin{equation*}
\gamma=\begin{cases}m/2&\mbox{if}\ \ord_2m=2,\\m/2-1&\mbox{otherwise},\ \end{cases}
\end{equation*}
\begin{equation*}
c_0=\gamma^2-km=\begin{cases}0&\mbox{if}\ \ord_2m=2,\\1&\mbox{otherwise},\ \end{cases}
\end{equation*}
and set 
$\alpha=a_0=1-c_0$.
Note that $k$ is odd and $0\le c_0<m$. Let $\alpha,\gamma,a_0,c_0$ be as above, and let $\beta=b_0=0$, then
$l^*_m(n)=n-k$. Since $n-k$ is even, and note that either $\alpha$ or $\gamma$ is odd and that
at least one of $\alpha,\gamma$ is prime to $m$. Thus by (ii) of Lemma \ref{Lem local} and Lemma \ref{Lem p>2}, $l_m(n)=mn+1$ is represented by $M_p+u$ for each prime $p$ and $l_m(n)\equiv1\pmod4$. Finally, with the essentially same method at the end of Case 1, one can easily get the
desired results.
\medskip

{\it Case} 3. $2\nmid m$.

In this case, since S. Mezroui, A. Azizi and M. Ziane \cite{MAZ} proved that $\mathcal{F}_3$ can represent each natural number, we may assume that $m\ge5$.
We put $e_i'=\frac{1}{2}e_i$ for $i=1,2,3$ and $M'=\Z e_1'\perp \Z e_2'\perp \Z e_3'$. If $l_m(n)$ is represented by $M'_2+u$, then there are $x_i\in\Z_2$ ($i=1,2,3$) such that
\begin{equation}\label{equation C}
l_m(n)=Q\left((x_1+\frac{\alpha}{m})e_1'+(x_2+\frac{\beta}{m})e_2'+(x_3+\frac{\gamma}{m})e_3'\right).
\end{equation}
For $i=1,2,3$, let
$$\ve_i=\begin{cases}1&\mbox{if}\ 2\nmid x_i,\\0&\mbox{if}\ 2\mid x_i,\end{cases}$$
and let $\alpha'=\alpha+\ve_1m$, $\beta'=\beta+\ve_2m$, $\gamma'=\gamma+\ve_3m$, then (\ref{equation C})
is equal to
\begin{equation*}
Q\left(\frac{x_1-\ve_1}{2}e_1+\frac{x_2-\ve_2}{2}e_2+\frac{x_3-\ve_3}{2}e_3+
\frac{1}{2m}(\alpha'e_1+\beta'e_2+\gamma'e_3)\right).
\end{equation*}
If we replace $\alpha,\beta,\gamma$ by $\alpha',\beta',\gamma'$ respectively, then
it might worth mentioning here that $l_m(n)$ is invariant and the above results on the local representation of $l_m(n)$ over $\Z_p$ ($p\ne2$) still hold.
Thus we only need to consider the representation of $l_m(n)$ by $M_2'+u$ over $\Z_2$.
Since $2\nmid m$ and
$u=\frac{1}{m}(\alpha e_1'+\beta e_2'+\gamma e_3')$, thus $M_2'+u=M_2'$. As $M_2'$ is anisotropic,
by \cite[Lemma 5.2.6 and Propositon 5.2.3]{Ki},
we have
$$M_2'\cong \begin{pmatrix} 2 & 1 \\1 & 2 \end{pmatrix}\perp \langle 3\rangle,$$
and \begin{equation}\label{M2}
Q(\begin{pmatrix} 2 & 1 \\1 & 2 \end{pmatrix})=\{x\in\Z_2: \ord_2x\equiv1\pmod2\}\cup\{0\}
\end{equation}
(for a $\Z_2$-lattice $K$, $Q(K)$ denotes the set $\{x\in K: Q(x)\}$).

When $n\equiv 0\pmod4$, put $\alpha=a_0=1$, and $\beta=b_0=\gamma=c_0=0$, then
$l_m(n)=mn+1\equiv 1\pmod 4$. By (\ref{M2}), it is easy to see that $l_m(n)$ is represented by $M_2'$.

When $n\equiv 2\pmod4$, we put $\alpha=\beta=\gamma=a_0=b_0=c_0=1$, then
$l_m(n)=mn+3\equiv 1\pmod4$. By (\ref{M2}), $l_m(n)$ is represented by $M_2'$.
It needs noting here that $mn+3$ is not of the form $3r^2$ with $r\in\N$, since if $mn+3=3r^2$ for some $r\in\N$, then $mn/3+1=r^2\equiv 3\pmod4$, a contradiction.

When $n$ is odd, let $\gamma=c_0=0$, $a_0=\alpha^2$ and $b_0=\beta^2$, where
$$(\alpha,\beta)=\begin{cases}(2,0)&\mbox{if}\ m\equiv n\pmod4,\\(1,1)&\mbox{otherwise}.\ \end{cases}$$
Thus $l_m(n)=mn+a_0+b_0\equiv 1\pmod4$. By (\ref{M2}), it is easy to see that
$l_m(n)$ is represented by $M'_2$.

In sum, by Lemma \ref{Lem p>2} and (\ref{tr^2, m}), $l_m(n)$ can be represented by $M'+u$ locally.
This implies that $l_m(n)$ can be represented by $M+u'$ locally, where
$u'=\frac{1}{2m}(\alpha'e_1+\beta'e_2+\gamma'e_3)$. It is easy to verify that
$\mathcal{S}_m=\mathcal{S}_m^1$. If $n\not\in\mathcal{S}_m^1$, then by Lemma \ref{Lem summary}, each sufficiently large $n$ can be represented by $\mathcal{F}_m$. If $n\in\mathcal{S}_m^1$, that is,
$mn+a_0+b_0+c_0=x^2$ for some $x\in\Z$, by the above choice of $a_0,b_0,c_0$ and by the assumption that $m\ge5$, we have $0\le a_0+b_0+c_0<m$. This implies that
$n=\lfloor x^2/m\rfloor+\lfloor 0^2/m\rfloor+\lfloor 0^2/m\rfloor$.

In view of the above, we complete the proof.

\maketitle
\section{Proof of Theorem \ref{Thm a,b,c}}
\setcounter{lemma}{0}
\setcounter{theorem}{0}
\setcounter{corollary}{0}
\setcounter{remark}{0}
\setcounter{equation}{0}
\setcounter{conjecture}{0}

In this section, we adopt the notations as in Section 1 and Section 2, without loss of generality, we may assume that $\ord_2a\ge\ord_2b\ge\ord_2c$.

Before the proof of Theorem \ref{Thm a,b,c}, we set $e_i'=\frac12 e_i$ for $i=1,2,3$, and let
$L'=abc\langle a,b,c\rangle=\Z e_1'\perp\Z e_2'\perp\Z e_3'$. If $l_{a,b,c}(n)$ is represented by $L'_2+v$. Then there are $x_i\in\Z_2$ ($i=1,2,3$) such that
\begin{equation}\label{equation D}
l_{a,b,c}(n)=Q\left((x_1+\frac{\alpha}{a})e_1'+(x_2+\frac{\beta}{b})e_2'+(x_3+\frac{\gamma}{c})e_3'\right).
\end{equation}
For $i=1,2,3$, let $$\ve_i=\begin{cases}1&\mbox{if}\ 2\nmid x_i,\\0&\mbox{if}\ 2\mid x_i,\end{cases}$$
and let $\alpha'=\alpha+\ve_1a$, $\beta'=\beta+\ve_2b$, $\gamma'=\gamma+\ve_3c$. Then (\ref{equation D})
is equal to
\begin{equation*}
Q\left(\frac{x_1-\ve_1}{2}e_1+\frac{x_2-\ve_2}{2}e_2+\frac{x_3-\ve_3}{2}e_3+
\frac{1}{2}(\frac{\alpha'}{a}e_1+\frac{\beta'}{b}e_2+\frac{\gamma'}{c}e_3)\right).
\end{equation*}
If we replace $\alpha,\beta,\gamma$ by $\alpha',\beta',\gamma'$ respectively, then
it might worth mentioning here that $l_{a,b,c}(n)$ is invariant and the above results on the local representation of $l_{a,b,c}(n)$ over $\Z_p$ ($p\ne2$) are still valid.
Thus we only need to consider the representation of $l_{a,b,c}(n)$ by $L_2'+v$ over $\Z_2$.

\noindent{\it\bf Proof of Theorem 1.2}. Firstly, it is easy to see that $l_{a,b,c}(n)$ is represented by $L_2'+v$ if and only if
$l^*_{a,b,c}(n):=n+(a_0-\alpha^2)/a+(b_0-\beta^2)/b+(c_0-\gamma^2)/c$ is represented by $g^*(x,y,z):=ax^2+2\alpha x+by^2+2\beta y+cz^2+2\gamma z$
over $\Z_2$.
We will divide the proof into following three cases.

{\it Case} 1. $\ord_2a=1$ and $2\nmid bc$.

When $n\equiv 0\pmod4$, we put $\alpha=\beta=a_0=b_0=1$ and $c_0=\gamma^2$, where
$$\gamma=\begin{cases}1&\mbox{if}\ b\not\equiv c\pmod4,\\2&\mbox{if}\ b\equiv c\pmod4.\end{cases}$$
Then $l^*_{a,b,c}(n)=n$ and by (ii) of Lemma \ref{Lem local}, it is easy to see that
$l_{a,b,c}(n)$ can be represented by $L_2'+v$ and $l_{a,b,c}(n)\equiv 3\pmod 4$.

When $n\equiv 2\pmod4$, we put $\alpha'=a_0=1$, and $b_0=\beta'^2$, $c_0=\gamma'^2$, where
$$(\beta,\gamma)=\begin{cases}(1,2)&\mbox{if}\ b\not\equiv c\pmod4,\\(2,2)&\mbox{if}\ b\equiv c\pmod4.\end{cases}$$
Then $l^*_{a,b,c}(n)=n$. Since $n-b\cdot1^2-2\beta\cdot1-c\cdot1^2-4\cdot1\equiv 0\pmod4$,
by (ii) of Lemma \ref{Lem local}, it is easy to see that $l^*_{a,b,c}(n)=n$ can be represented by $g^*(x,y,z)$ over $\Z_2$. Hence
$l_{a,b,c}(n)$ can be represented by $L_2'+v$ and $l_{a,b,c}(n)\equiv 3\pmod 4$.

When $n$ is odd, put $\alpha=a_0=1$, $b_0=\beta^2$ and $c_0=\gamma^2$, where
$$\beta=\begin{cases}1&\mbox{if}\ b\not\equiv n\pmod4,\\2&\mbox{if}\ b\equiv n\pmod4,\end{cases}$$
and
$$\gamma=\begin{cases}\beta-1&\mbox{if}\ c\not\equiv b\pmod4,\\\beta&\mbox{if}\ c\equiv b\pmod4.\end{cases}$$
Since $n-b\cdot1^2-2\beta\cdot1\equiv 0\pmod4$, by (ii) of Lemma \ref{Lem local}, $l_{a,b,c}(n)$ can be represented by $L_2'+v$ and $l_{a,b,c}(n)\equiv 3\pmod 4$.

In sum, by our choice of $\alpha,\beta,\gamma$, $l_{a,b,c}(n)\equiv 3\pmod4$ and Lemma \ref{Lem local},
$l_{a,b,c}(n)$ can be represented by $L'+v$ locally. This implies that $l_{a,b,c}(n)$ can be represented by $L+v'$ locally, where
$v'=\frac12(\frac{\alpha'}{a}e_1+\frac{\beta'}{b}e_2+\frac{\gamma'}{c}e_3)$.
By the fact that $a,b,c$ are pairwisely coprime, we have any prime factor of $2abc$ is prime to
$l_{a,b,c}(n)$ and $l_{a,b,c}$ is not a square. Thus by (\ref{tr^2}), $\mathcal{S}_{a,b,c}=\emptyset$. Hence by Lemma \ref{Lem summary}, every sufficiently large integer $n$ can be written as
$$\lfloor x^2/a\rfloor+\lfloor y^2/b\rfloor+\lfloor z^2/c\rfloor,$$
with $x,y,z\in\Z$.

{\it Case} 2. $\ord_2a\ge2$ and $2\nmid bc$.

Note that in the proof of Case 1 we always put $\alpha=a_0=1$. Hence for any positive odd integers $a',b,c$ with
$b,c\ge5$, $\lfloor x^2/2a'\rfloor+\lfloor y^2/b\rfloor+\lfloor z^2/c\rfloor$ is almost universal. If $\ord_2a$ is
odd, in view of (\ref{tx2}), $\lfloor x^2/a\rfloor+\lfloor y^2/b\rfloor+\lfloor z^2/c\rfloor$ is almost universal.
With the same reason, when $\ord_2a$ is even, it is sufficient to prove the case when $\ord_2a=2$. Below we suppose that $\ord_2a=2$.

When $n$ is even, put $\alpha=\beta=a_0=b_0=1$ and $c_0=\gamma^2$, where
$$\gamma=\begin{cases}1&\mbox{if}\ bc\not\equiv1\pmod8,\\2&\mbox{if}\ bc\equiv1\pmod8.\end{cases}$$
Then by Lemma \ref{Lem local}, it is easy to see that the odd integer $l_{a,b,c}(n)$ is represented by $L_2'+v$ and
$l_{a,b,c}(n)\not\equiv 1\pmod8$.

When $n$ is odd, put $\alpha=\beta=a_0=b_0=1$, and $c_0=\gamma^2$, where
$$\gamma=\begin{cases}2&\mbox{if}\ bc\not\equiv1\pmod8,\\1&\mbox{if}\ bc\equiv1\pmod8.\end{cases}$$
Since $n-b\cdot1^2-2\beta\cdot1$ is even, by Lemma \ref{Lem local}, $l^*_{a,b,c}(n)$ can be represented by
$g^*(x,y,z)$ over $\Z_2$. This implies that 
the odd integer $l_{a,b,c}(n)$ is represented by $L_2'+v$ and
$l_{a,b,c}(n)\not\equiv 1\pmod8$.

In sum, by our choice of $\alpha,\beta,\gamma$ and Lemma \ref{Lem local}, the odd integer $l_{a,b,c}(n)$ can be represented by $L+v'$ locally and $l_{a,b,c}(n)\not\equiv1\pmod8$. By the fact that $a,b,c$ are pairwisely coprime and (\ref{tr^2}), $\mathcal{S}_{a,b,c}=\emptyset$. Thus by Lemma \ref{Lem summary}, $\mathcal{F}_{a,b,c}(x,y,z)$ is almost universal.

{\it Case} 3. $2\nmid abc$.

In this case, without loss of generality, we may assume that $b\equiv c\pmod4$. Since
$L'_2+v=L_2'$, by \cite[Lemma 5.2.6 and Propositon 5.2.3]{Ki}, we have
$$L_2'\cong\begin{cases}\begin{pmatrix} 2 & 1 \\1 & 2 \end{pmatrix}\perp \langle 3\rangle,&\mbox{if}\ a\equiv b\equiv c\pmod 4,\\\\\begin{pmatrix} 0 & 1 \\1 & 0 \end{pmatrix}\perp \langle -1\rangle,&\mbox{if}\ a\not\equiv b\equiv c\pmod4.\end{cases}$$

We first consider the case when $a\not\equiv b\equiv c\pmod 4$, then
$$L_2'\cong \begin{pmatrix} 0 & 1 \\1 & 0 \end{pmatrix}\perp \langle -1\rangle.$$ It is easy to see that $L_2'$ can represent all $2$-adic integers over $
\Z_2$.

When $n$ is even, let $\alpha=a_0=1$, and $b_0=c_0=\beta^2=\gamma^2$, where
$$\beta=\gamma=\begin{cases}1&\mbox{if}\ n\equiv0\pmod4,\\2&\mbox{if}\ n\equiv2\pmod4.\end{cases}$$

When $n$ is odd, put $\beta=b_0=1$ and $a_0=\alpha^2$, $c_0=\gamma^2$, where
$$(\alpha,\gamma)=\begin{cases}(2,1)&\mbox{if}\ a\equiv n\pmod4,\\(1,2)&\mbox{if}\ a\not\equiv n\pmod4.\end{cases}$$

In view of the above, by the choice of $\alpha,\beta,\gamma$ and Lemma \ref{Lem local},
we have $l_{a,b,c}(n)\equiv 3\pmod4$ and $l_{a,b,c}(n)$ is represented by $L+v'$ locally.
By the fact that $a,b,c$ are pairwisely coprime and (\ref{tr^2}), we have
$\mathcal{S}_{a,b,c}=\emptyset$. Thus by
Lemma \ref{Lem summary}, $\mathcal{F}_{a,b,c}(x,y,z)$ is almost universal.

Now, we consider the case when $a\equiv b\equiv c\pmod4$.
In this case, by the symmetry of $a,b,c$, without loss of generality, we may assume that $b\equiv c\pmod 8$, and we 
set $a+4\mu\equiv b\equiv c\pmod8$, where
$$\mu=\begin{cases}1&\mbox{if}\ a\not\equiv b\pmod8,\\0&\mbox{if}\ a\equiv b\pmod8.\end{cases}$$

Reall that
$$L_2'\cong\begin{pmatrix} 2 & 1 \\1 & 2 \end{pmatrix}\perp \langle 3\rangle,$$
and $$Q\left(\begin{pmatrix} 2 & 1 \\1 & 2 \end{pmatrix}\right)=\{x\in\Z_2: \ord_2x\equiv1\pmod2\}\cup\{0\}.$$
It is easy to see that
$$\{4m+1,8m+3,8m+6: m\in\Z_2\}\subseteq Q(L_2').$$
We further set
$$\mathcal{A}=\{l_{a,b,c}(n)+8\Z: a_0,b_0,c_0=1,4\}.$$
Via computation, we have 
\begin{equation}\label{equation E}
\{cab_0+abc_0+8\Z: b_0,c_0=1,4\}=\{0+8\Z,2+8\Z,(5-4\mu)+8\Z\}.
\end{equation}

When $n\equiv0\pmod4$, let $a_0=\alpha^2$, $b_0=\beta^2$, and $c_0=\gamma^2$, where
$$(\alpha,\beta,\gamma)=\begin{cases}(1,2,2)&\mbox{if}\ n\equiv4\pmod8,\\(1,1,1)&\mbox{if}\ n\equiv0\pmod8.\end{cases}$$
Then 
$$l_{a,b,c}(n)\equiv\begin{cases}5\pmod8&\mbox{if}\ n\equiv4\pmod8,\\3\pmod8&\mbox{if}\ n\equiv0\pmod8.\end{cases}$$ 
It is clear that $l_{a,b,c}(n)$ is represented by $L_2'$.

When $n\equiv1\pmod4$, suppose first that $n\equiv5\pmod8$. Let $a_0=\alpha^2$, where
$$\alpha=\begin{cases}1&\mbox{if}\ a\equiv 1,7\pmod8,\\2&\mbox{if}\ a\equiv 3,5\pmod8.\end{cases}$$
Then $abcn+bca_0\equiv 3,4,5,6\pmod8$. Thus by (\ref{equation E}), $3+8\Z,\ 5+8\Z,\ 6+8\Z$ are all in $\mathcal{A}$. Therefore, there exist $\alpha,\beta,\gamma\in\{1,2\}$ such that $l_{a,b,c}(n)$ can be represented by $L_2'$ and
$l_{a,b,c}(n)+8\Z\in\{3+8\Z,5+8\Z,6+8\Z\}$.

Suppose now that $n\equiv 1\pmod 8$. Let
$$\alpha=\begin{cases}1&\mbox{if}\ a\equiv 3,5\pmod8,\\2&\mbox{if}\ a\equiv 1,7\pmod8.\end{cases}$$
Then $abcn+bca_0\equiv3,4,5,6\pmod8$. By (\ref{equation E}), $3+8\Z,5+8\Z,6+8\Z$ are all in $\mathcal{A}$.
Hence there exist $\alpha,\beta,\gamma\in\{1,2\}$ such that $l_{a,b,c}(n)$ can be represented by $L_2'$ and
$l_{a,b,c}(n)+8\Z\in\{3+8\Z,5+8\Z,6+8\Z\}$.

When $n\equiv 3\pmod4$, suppose first that $n\equiv7\pmod8$. Let $a_0=\alpha^2$, where
$$\alpha=\begin{cases}1&\mbox{if}\ a\equiv 3,5\pmod8,\\2&\mbox{if}\ a\equiv 1,7\pmod8.\end{cases}$$
Then $abcn+bca_0\equiv3,4,5,6\pmod8$. By (\ref{equation E}), $3+8\Z,5+8\Z,6+8\Z$ are all in $\mathcal{A}$. 
Therefore, there exist $\alpha,\beta,\gamma\in\{1,2\}$ such that $l_{a,b,c}(n)$ can be represented by $L_2'$ and
$l_{a,b,c}(n)+8\Z\in\{3+8\Z,5+8\Z,6+8\Z\}$.

Suppose now that $n\equiv 3\pmod 8$. Let
$$\alpha=\begin{cases}1&\mbox{if}\ a\equiv 1,7\pmod8,\\2&\mbox{if}\ a\equiv 3,5\pmod8.\end{cases}$$
Then $abcn+bca_0\equiv3,4,5,6\pmod8$. By (\ref{equation E}), $3+8\Z,5+8\Z,6+8\Z$ are all in $\mathcal{A}$.
Hence there exist $\alpha,\beta,\gamma\in\{1,2\}$ such that $l_{a,b,c}(n)$ can be represented by $L_2'$ and
$l_{a,b,c}(n)+8\Z\in\{3+8\Z,5+8\Z,6+8\Z\}$.

When $n\equiv 2\pmod4$, suppose first that $n\equiv6\pmod8$ and $\mu=1$. When this occurs, we set $a_0=\alpha^2$, where
$$\alpha=\begin{cases}1&\mbox{if}\ a\equiv 3\pmod4,\\2&\mbox{if}\ a\equiv 1\pmod4.\end{cases}$$
Then $abcn+bca_0\equiv 2,3\pmod 8$. Note that $(\ref{equation E})=\{0+8\Z,1+8\Z,2+8\Z\}$. Thus
$3+8\Z,5+8\Z$ are in $\mathcal{A}$. Hence there exist $\alpha,\beta,\gamma\in\{1,2\}$ such that $l_{a,b,c}(n)$ can be represented by $L_2'$ and
$l_{a,b,c}(n)+8\Z\in\{3+8\Z,5+8\Z\}$.

Suppose now that $n\equiv 6\pmod8$ and $\mu=0$. If $a\equiv3\pmod4$, we put $\alpha=a_0=1$,
$b_0=\beta^2$ and $c_0=\gamma^2$. Then
$abcn+bca_0\equiv 3\pmod8$. By (\ref{equation E}), $5+8\Z\in\mathcal{A}$. Hence there exist $\alpha,\beta,\gamma\in\{1,2\}$ such that $l_{a,b,c}(n)$ can be represented by $L_2'$ and $l_{a,b,c}(n)\equiv5\pmod8$.

If $a\equiv1\pmod4$, we set $a=4l+1$ with $l\in\Z$. Since Sun \cite{Sun15} and S. Mezroui, A. Azizi and M. Ziane \cite{MAZ}
proved that $\mathcal{F}_5$ and $\mathcal{F}_9$ can represent all natural numbers respectively. By the symmetry of $a,b,c$ in this case (note that $a\equiv b\equiv c\pmod8$), we may assume that $l\ge3$. If we put $\alpha=2l+1$, then one may easily verify that $a_0=3l+1$. If we put $\alpha=2l+2$, then $a_0=3l+3<4l+1$. When $l$ is even, then one of $abcn+3l+1$, $abcn+3l+3$ is not
congruent to $7$ modulo $8$. According to (\ref{equation E}), there exist $\beta,\gamma\in\{1,2\}$ and
$\alpha\in\{2l+1,2l+2\}$ such that $l_{a,b,c}(n)$ is congruent to $3$ or $5$ modulo $8$. When $l$ is odd, then one of $abcn+3l+1$, $abcn+3l+3$ is not congruent to $2$ modulo $8$. By (\ref{equation E}), there are  $\beta,\gamma\in\{1,2\}$ and $\alpha\in\{2l+1,2l+2\}$ such that $l_{a,b,c}(n)$ is congruent to $5$ or $6$ modulo $8$. It might worth mentioning here
that $(a,2l+1)=1$ and that $(a,2l+2)\in\{1,3\}$. If $(a,2l+2)=3$, then $a=4l+1\equiv0\pmod3$.
Thus $3\nmid bc\beta\gamma$ and $3\mid l_{a,b,c}(n)$. By Hensel's Lemma, one may easily verify that
$l_{a,b,c}(n)$ can be represented by $L_3+v$. In view of the above, by Lemma \ref{Lem local}, $l_{a,b,c}(n)$ can be represented by
$L'+v$ locally and $l_{a,b,c}(n)$ is congruent to $3,5$ or $6$ modulo $8$.

Now, we consider the case when $n\equiv 2\pmod8$. Suppose first that $\mu=1$. Let
$$\alpha=\begin{cases}1&\mbox{if}\ a\equiv 1\pmod4,\\2&\mbox{if}\ a\equiv 3\pmod4.\end{cases}$$
Then $abcn+bca_0\equiv 2,3\pmod8$. By (\ref{equation E}), $3+8\Z\in\mathcal{A}$. Hence there exist $\alpha,\beta,\gamma\in\{1,2\}$ such that $l_{a,b,c}(n)$ can be represented by $L_2'$ and
$l_{a,b,c}(n)\equiv3\pmod8$.

Now, suppose that $\mu=0$. If $a\equiv1\pmod4$, we let $\alpha=1$. By (\ref{equation E}), it is easy to see that
$3+8\Z\in\mathcal{A}$. Hence there exist $\alpha,\beta,\gamma\in\{1,2\}$ such that $l_{a,b,c}(n)$ can be represented by $L_2'$ and $l_{a,b,c}(n)\equiv3\pmod8$.
If $a\equiv 3\pmod4$,
we set $a=4l+3$ with $l\ge1$. If we put $\alpha=2l+1$, then one may easily verify that $a_0=l+1$. If we put $\alpha=2l$, then $a_0=l+3$. When $l$ is even, one of $abcn+l+1$, $abcn+l+3$ is not congruent to $7$ modulo $8$. When $l$ is odd, one of $abcn+l+1$, $abcn+l+3$ is not congruent to $2$ modulo $8$. It might worth mentioning here
that $(a,2l+1)=1$ and that $(a,2l)\in\{1,3\}$. If $(a,2l)=3$, with the same reason as the above, by Hensel's Lemma, one may easily verify that
$l_{a,b,c}(n)$ can be represented by $L_3+v$. In view of the above, by Lemma \ref{Lem local} and (\ref{equation E}),
 there exist $\beta,\gamma\in\{1,2\}$ and $\alpha\in\{2l+1,2l\}$ such that $l_{a,b,c}(n)$ can be
 represented by $L'+v$ locally and
$l_{a,b,c}(n)+8\Z\in\{3+8\Z,5+8\Z,6+8\Z\}$.

In sum, by our choice of $\alpha,\beta,\gamma$ and Lemma \ref{Lem local},
$l_{a,b,c}(n)$ is represented by $L+v'$ locally. By the fact that $a,b,c$ are pairwisely coprime, any odd prime divisor of $abc$ is prime to $l_{a,b,c}(n)$.
Note also that $l_{a,b,c}(n)+8\Z\in\{3+8\Z,5+8\Z,6+8\Z\}$, thus $l_{a,b,c}(n)$ is neither a square nor two times a square. Hence by (\ref{tr^2}), we have $\mathcal{S}_{a,b,c}=\emptyset$. By Lemma \ref{Lem summary}, every sufficiently large integer $n$ can be written as
$$\lfloor x^2/a\rfloor+\lfloor y^2/b\rfloor+\lfloor z^2/c\rfloor,$$
with $x,y,z\in\Z$.

The proof of Theorem \ref{Thm a,b,c} is complete.\qed

\acknowledgment We would like to thank Prof. Zhi-Wei Sun for his helpful comments and steadfast encouragement.

This research was supported by the National Natural Science Foundation of
China (Grant No. 11571162).

\end{document}